\documentclass[11pt]{article}
\usepackage[margin=1in]{geometry}
\usepackage{lmodern,microtype,amsmath,amssymb,amsthm,mathtools}
\usepackage[hidelinks]{hyperref}
\usepackage{enumitem}
\newtheorem{theorem}{Theorem}[section]
\newtheorem{lemma}[theorem]{Lemma}
\newtheorem{proposition}[theorem]{Proposition}\newtheorem{conjecture}[theorem]{Conjecture}
\newtheorem{remark}[theorem]{Remark}
\newcommand{\C}{\mathcal C}
\newcommand{\E}{\mathbb E}

\newcommand{\R}{\mathbb R}

\title{Sharp $L^6$ Fourier moments  and (3,$\delta$)-energy estimates for fractal measures on the truncated cone in $\R^3$}
\author{Ciprian Demeter}
\date{}
\begin{document}
	\maketitle
	\begingroup
	\renewcommand{\thefootnote}{}
	\footnotetext{\textit{Key words and phrases.} Additive energy, cone, Katz--Tao sets, planar incidences.\\
		The author is partially supported by NSF grant DMS-2349828.}
	\endgroup
	\begin{abstract}
		For a $\delta$-separated $(\delta,d)$-Katz--Tao set $P$ on the truncated cone in $\R^3$, $1<d\le 2$, we prove the sharp energy bound
		\[
		\E_{3,\delta}(P)\lesssim_{\varepsilon} \delta^{-\varepsilon}
		\max\{\delta^{-(3d+2)},\delta^{-(5d-1)}\}.
		\]
		The proof applies the Fu--Ren incidence theorem  to triples with a fixed sum.  As a corollary we also deduce a sharp $L^6$ moment estimate for the Fourier transform of Frostman measures supported on the cone.
	\end{abstract}
	
	\section{Introduction}
	
	\subsection{The main results for the cone}
	
	We consider the truncated cone
	\[
	\C=\{(r\cos\theta,r\sin\theta,r):\tfrac12\le r\le1\},\]
	and the key notation associated with it
	\[Q(x,y,t)=t^2-x^2-y^2,\qquad e(\theta)=(\cos\theta,\sin\theta).
	\]
	
	A probability measure $\mu$ on $\C$ is called $d$-Frostman if there is a constant $C$ such that
	$\mu(B(x,r))\le Cr^d$ for all $0<r\le2$.
	We use $\widehat\mu(\xi)=\int e^{-2\pi i x\cdot\xi}\,d\mu(x)$.

	\begin{theorem}\label{thm:fourierf}
		Let $1<d\le2$, and let $\mu$ be a probability measure supported on $\C$ such that
		\begin{equation}\label{frostmanf}
			\mu(B(x,r))\le C r^d,\qquad 0<r\le 2.
		\end{equation}
		For every $\varepsilon>0$ and $R\ge2$,
		\begin{equation}\label{fourierboundf}
			\int_{B(0,R)}|\widehat\mu(\xi)|^6\,d\xi
			\lesssim_{\varepsilon,C} R^{\varepsilon}
			\max\{R^{5-3d},R^{2-d}\}.
		\end{equation}
	\end{theorem}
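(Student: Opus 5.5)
We will derive Theorem \ref{thm:fourierf} from the discrete energy bound stated in the abstract,
\[
\E_{3,\delta}(P)\lesssim_\varepsilon \delta^{-\varepsilon}\max\{\delta^{-(3d+2)},\delta^{-(5d-1)}\},
\]
valid for $\delta$-separated $(\delta,d)$-Katz--Tao sets $P\subset\C$. Here $\E_{3,\delta}(P)$ counts sextuples $(p_1,\dots,p_6)\in P^6$ with $|p_1+p_2+p_3-p_4-p_5-p_6|\lesssim\delta$. We take $\delta=R^{-1}$ and aim at the normalization
\[
\int_{B(0,R)}|\widehat\mu|^6\lesssim R^{3}\,\mu^{6}\bigl(\{|x_1+x_2+x_3-x_4-x_5-x_6|\lesssim R^{-1}\}\bigr).
\]
This follows from the standard argument: insert a Schwartz bump $\phi_R\ge 1_{B(0,R)}$ with $\widehat{\phi_R}$ supported in $B(0,R^{-1})$, and expand $|\widehat\mu|^6=\widehat\mu^3\,\overline{\widehat\mu}^{\,3}$.

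Next we discretize $\mu$. Cover $\C$ by $\delta$-balls on a $\delta$-grid and let $w_p=\mu(B(p,\delta))\le C\delta^d$. The sextuple measure is then bounded by
\[
\sum_{p_1+p_2+p_3-p_4-p_5-p_6=O(\delta)} w_{p_1}\cdots w_{p_6}.
\]
We pigeonhole dyadically the weights into classes $w_p\sim 2^{-k}$, losing only $(\log R)^{O(1)}$ factors. By Hölder (or the multilinear-to-diagonal reduction for energies, via Cauchy--Schwarz in the Fourier representation $\int\prod|\widehat{\mu_{k_i}}|$), it suffices to treat a single class $P_k=\{p:w_p\sim 2^{-k}\}$ and bound $2^{-6k}\E_{3,\delta}(P_k)$.

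For each class we compare with the Katz--Tao condition. The Frostman bound gives $\#(P_k\cap B(x,r))\lesssim 2^{k}r^d$ and $\#P_k\le 2^k$, with $2^{-k}\lesssim\delta^d$. So $P_k$ is a $(\delta,d)$-Katz--Tao set up to the factor $M=2^k\delta^d\ge 1$. We split $P_k$ into $\sim M$ genuine Katz--Tao subsets by random partition, or we use the bound directly with multiplicity. By the triangle inequality for $\E_3^{1/6}$, this yields $\E_{3,\delta}(P_k)\lesssim M^6\,\E_{3,\delta}(\text{KT set})$. Hence
\[
2^{-6k}\E_{3,\delta}(P_k)\lesssim \delta^{6d}\delta^{-\varepsilon}\max\{\delta^{-3d-2},\delta^{-5d+1}\}.
\]
Multiplying by $R^3$ gives $R^{3-6d}\max\{R^{3d+2},R^{5d-1}\}=\max\{R^{5-3d},R^{2-d}\}$, as claimed.

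The main obstacle is this reduction from weighted to unweighted sets. The crude triangle-inequality splitting loses $M^6$, which is acceptable above only because we normalized by $\delta^{6d}$; the real difficulty is that the heavier classes are not equally sized, and cross terms between different classes must be controlled. We would handle this by Hölder in $\xi$: write $\mu\approx\sum_k\mu_k$ with $\mu_k$ the restriction to $P_k$-balls, and use $\|\widehat\mu\|_{L^6(B_R)}\le\sum_k\|\widehat{\mu_k}\|_{L^6(B_R)}$. Each $\mu_k$ has total mass at most $1$ and is a rescaled Katz--Tao measure, and there are only $O(\log R)$ values of $k$. Since the energy bound is stated for sets of arbitrary cardinality up to $\delta^{-d}$, no cardinality optimization is needed, which avoids any loss beyond $R^\varepsilon$.
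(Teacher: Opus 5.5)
Your argument is correct, but it takes the ``standard'' route that the paper describes at the start of Section~4 and deliberately sets aside.

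\textbf{Your route.} You pigeonhole the cell masses into dyadic levels $w_p\sim 2^{-k}$. Each level set $P_k$ is then Katz--Tao with inflated constant $M=2^k\delta^d$. You split it into $\sim M$ pieces of controlled KT constant, and recombine the pieces and the $O(\log R)$ levels by Minkowski in $L^6(B_R)$. The loss $M^6$ is exactly cancelled, since $2^{-6k}M^6=\delta^{6d}$.

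\textbf{The paper's route.} The paper makes one weighted random selection instead. For each $k=1,\dots,6$ independently, it keeps the net point $x_j$ with probability $a_j=\mu(D_j)/(C_1\delta^d)$. Then $\sum_\Gamma m_{j_1}\cdots m_{j_6}=(C_1\delta^d)^6\,\mathbb{E}Z$ holds \emph{exactly}, where $Z$ counts additive sextuples whose $k$-th entry lies in the $k$-th sample. On the event, of probability $1-O(\delta^D)$, that the union of the samples is KT with constant $O(\log(1/\delta))$, $Z\le\E_{3,\delta}(S)$ is bounded by Theorem~\ref{thm:main}. The complementary event costs only $O(\delta^{D-18})$.

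\textbf{What each buys.} The paper's version needs no level sets and no recombination step, and it yields a bound for $\mu^{\otimes 6}$ of the sextuple set directly (Lemma~\ref{lem:measureenergy}). Yours is more modular. A random split gives KT constant $O(\log(1/\delta))$, not the ``genuine'' $O(1)$ you state. You can get $O(1)$ by replacing it with a cyclic colouring of $P_k$ along a depth-first order of the dyadic tree. Each dyadic cube is then a contiguous block, so each colour has $\lesssim(\rho/\delta)^d$ points in it. This also avoids the tacit assumption, present in the paper, that Theorem~\ref{thm:main} depends polynomially on the KT constant.

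\textbf{Details to tidy.}
\begin{enumerate}[label=(\roman*)]
\item Use a disjoint cell partition rather than the overlapping balls $B(p,\delta)$, so that $\widehat\mu=\sum_k\widehat{\mu_k}$ holds exactly.
\item Take the net points on $\C$, or project them to $\C$, which changes the sums by $O(\delta)$.
\item Discard levels with $w_p<\delta^{100}$; they contribute negligibly, and this leaves only $O(\log R)$ classes.
\item Apply Minkowski to the transforms $\widehat{\mu_{k,i}}$ of the pieces, rather than invoking a triangle inequality for $\E_{3,\delta}^{1/6}$, which would require changing $C_*$.
\item The bump $\phi_R$ must also be nonnegative everywhere, e.g.\ a rescaled $|\widehat\psi|^2$ with $0\le\psi\in C_c^\infty$ of small support.
\end{enumerate}
Finally, the alternative of using the bound ``directly with multiplicity'' is not available from the statement of the energy theorem, since its dependence on the KT constant is unspecified. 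Keep the splitting step.
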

	When $d\le 1$ the supporting cone loses its relevance, as the measure could live entirely on a line segment.

	The exponents $5-3d,2-d$ are both attainable, as explained in Remark \ref{djhfdhufhrgtgu9thu90}.

	In the last section we derive Theorem \ref{thm:fourierf} from a discrete energy theorem, whose proof occupies most of this paper.

	We call a subset $P$ of a metric space  a $(\delta,d,C)$-KT set if (and we will call this a KT non-concentration condition; KT is short for Katz--Tao who first showed the relevance of such sets for continuous problems)
	\begin{equation}\label{kt}
		\#(P\cap B(x,\rho))\le C(\rho/\delta)^d,
		\qquad \delta\le\rho\le 2.
	\end{equation}
	Let $P\subset\C$ be a finite $\delta$-separated set, $N=\#P$.  Condition \eqref{kt} gives $N\lesssim_C\delta^{-d}$.
	We  define the $(3,\delta)$-energy (in previous literature this was sometimes called (6,$\delta$)-energy) by
	\[
	\E_{3,\delta}(P)=\#\{(p_1,\dots,p_6)\in P^6:
	|p_1+p_2+p_3-p_4-p_5-p_6|\le C_*\delta\}.
	\]

	When $1<d\le2$, we will prove the sharp bound
	\begin{equation}\label{goal}
		\E_{3,\delta}(P)\lesssim_{\varepsilon,C,C_*}
		\delta^{-\varepsilon}\max\{\delta^{-(3d+2)},\delta^{-(5d-1)}\}.
	\end{equation}
	Both exponents occur for sets with $\#P\asymp\delta^{-d}$, as we will prove below.

	The case $0<d\le1$ does not have any substance. The trivial upper bound $O(N^5)$ for the energy is easily attainable by placing the points on a line inside the cone,  in arithmetic progression. This is precisely possible because the Hausdorff dimension of the line is at least as big as the ``dimension" $d$ of the points.

	From here on we assume that $1<d\le2$.
	\\
	\\
	\textbf{The flat example.}
	The $\delta$-neighborhood of the truncated cone is nearly planar on an angular sector of width $\sim \sqrt\delta$. Fix $\theta$ and  let $e=e(\theta)$ and $e^\perp=e'(\theta)$. Points of the form
	\begin{equation}\label{flatparam}
		p(r,y)=\big(re+ye^\perp,\sqrt{r^2+y^2}\big),
		\qquad \tfrac12\le r\le\tfrac34,\quad 0\le y\le c\sqrt\delta
	\end{equation}
	lie on a fixed such sector of $\C$. Note that
	\begin{equation}\label{flatapprox}
		p(r,y)=(re+ye^\perp,r)+O(c^2\delta).
	\end{equation}
	Take $r$ in a $\delta$-progression and $y$ in a $\delta^{d/2}$-progression. Each grid has
	\[
	n\asymp \delta^{-1}\frac{\sqrt\delta}{\delta^{d/2}}
	=\delta^{-(d+1)/2}
	\]
	points. The planar grid in \eqref{flatapprox} has $\gtrsim n^5$ additive 6-tuples: independently choose five points in the grid, and a positive proportion of the choices determine a sixth point within the grid. By \eqref{flatapprox}, these are $O(\delta)$-additive 6-tuples on the cone.
	
	We select $J\asymp\delta^{-(d-1)/2}$ angular sectors (of width $\sim \sqrt{\delta}$) with separation $\asymp\delta^{(d-1)/2}$. For each sector use the grid above, and we let $P$ be their union. Then
	\[
	\#P\asymp Jn\asymp\delta^{-d},\qquad
	\E_{3,\delta}(P)\gtrsim Jn^5=\delta^{-(3d+2)}.
	\]
	The Katz--Tao condition follows by counting grid points in a ball. For $\rho\le\sqrt\delta$ a ball meets $O(1)$ grids and has $O((\rho/\delta)(1+\rho/\delta^{d/2}))$ points. For $\rho\ge\sqrt\delta$ it meets $O(\rho/\delta^{(d-1)/2})$ grids and has $O(\rho\delta^{-(d+1)/2})$ points in each. Both bounds are $O((\rho/\delta)^d)$, as needed.
	\\
	\\
	\textbf{The radial arithmetic progression example.}
	Let
	\[
	h=\delta^{d-1},\qquad
	R_h=\{\tfrac12+jh:0\le j\lesssim h^{-1}\},
	\]
	and let $\Theta$ be a $\delta$-separated subset of $[0,\frac\pi2]$, with $\#\Theta\asymp\delta^{-1}$. Define
	\[
	P=\{(r\cos\theta,r\sin\theta,r):r\in R_h,\ \theta\in\Theta\}.
	\]
	Then $\#P\asymp h^{-1}\delta^{-1}=\delta^{-d}$. Moreover, for $\delta\le\rho\le1$,
	\[
	\#(P\cap B(x,\rho))\lesssim
	(1+\rho/h)\rho/\delta\lesssim(\rho/\delta)^d.
	\]
	The height coordinate of every triple sum belongs to $R_h+R_h+R_h$, which contains only $O(h^{-1})$ elements. At each such height, the two remaining sum coordinates occupy $O(\delta^{-2})$ squares of side $O(\delta)$ in the plane. Thus the $N^3\asymp\delta^{-3d}$ triple sums occupy at most $O(h^{-1}\delta^{-2})=O(\delta^{-(d+1)})$ $\delta$-cubes in $\R^3$. Cauchy--Schwarz gives
	\begin{equation}\label{aplower}
		\E_{3,\delta}(P)\gtrsim
		\frac{N^6}{\delta^{-(d+1)}}
		\asymp \delta^{-(5d-1)}.
	\end{equation}

	The two lower bounds cross at $d=3/2$. For $1<d\le3/2$ the flat term dominates, while for $3/2\le d\le2$ the radial term dominates. Our main theorem gives the corresponding upper bound.
	In fact, as a by-product of our method, a more refined bound emerges, one that takes into account the size $N$ of the set.
	\begin{theorem}\label{thm:main}
		Let $1<d\le2$ and let $P\subset\C$ be a $\delta$-separated $(\delta,d,C)$-KT set, with $N=\#P$. For every $\varepsilon>0$,
		\begin{equation}\label{mainbound}
			\E_{3,\delta}(P)\lesssim_{\varepsilon,C,C_*}
			N\delta^{-\varepsilon}\max\{\delta^{-(2d+2)},\delta^{-(4d-1)}\}.
		\end{equation}
		In particular, \eqref{goal} holds and is sharp up to $\delta^{-\varepsilon}$ for sets of maximal cardinality.
	\end{theorem}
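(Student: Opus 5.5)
The plan is to reduce \eqref{mainbound} to an incidence count in which one point of the six-tuple is frozen, and then apply the Fu--Ren incidence theorem to the curves cut out on $\C$ by ``triples with a fixed sum''. Writing $v=p_4+p_5+p_6$, we have
\[
\E_{3,\delta}(P)\le \sum_{p_6\in P}\ \sum_{(p_4,p_5)\in P^2} r(p_4+p_5+p_6),\qquad r(\xi)=\#\{(p_1,p_2,p_3)\in P^3:|p_1+p_2+p_3-\xi|\lesssim\delta\}.
\]
The aim is therefore the uniform bound
\[
\sum_{(p_4,p_5)} r(p_4+p_5+p_6)\lesssim_\varepsilon \delta^{-\varepsilon}\max\{\delta^{-(2d+2)},\delta^{-(4d-1)}\}
\]
for each fixed $p_6$; this yields the extra factor $N$ in \eqref{mainbound}. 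Given that, \eqref{goal} follows from $N\lesssim\delta^{-d}$, and sharpness is the content of the two examples above.

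\textbf{Step 1 (geometry of fixed sums).} If $p_1+p_2=w$ with $p_1,p_2\in\C$, then $Q(p_1)=0$ and $Q(w-p_1)=0$. Subtracting these gives the linear equation $2B(w,p_1)=Q(w)$, where $B$ is the Lorentz bilinear form. Hence $p_1$ lies on the plane $\Pi_w$, and so on the conic $\C\cap\Pi_w$. The same holds at scale $\delta$ for $O(\delta)$-sums, with $p_1$ in a $\delta$-neighbourhood of the plane. Thus $r(\xi)$ counts pairs $(p_3,p_1)$ with $p_1$ lying in the $\delta$-neighbourhood of the curve $\gamma_{\xi-p_3}:=\C\cap\Pi_{\xi-p_3}$ (and $p_2$ then determined up to $O(1)$ choices by $\delta$-separation). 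Parametrising $\C$ by $(r,\theta)$ turns this into a planar problem: $\delta$-separated points that are $(\delta,d)$-KT against a family of $\delta$-curved tubes $\gamma_w$.

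\textbf{Step 2 (Fu--Ren).} Put the ranges of $\xi=p_4+p_5+p_6$ ($p_6$ fixed) and of $p_3$ into dyadic level sets using pigeonholing (costing $\delta^{-\varepsilon}$). The quantity to bound then becomes a weighted incidence count $I(P,\mathcal T)$ between $P$ and the multiset of tubes $\mathcal T=\{\gamma_{p_4+p_5+p_6-p_3}\}$. I would check that the parameters $w=p_4+p_5+p_6-p_3$ inherit a KT-type non-concentration from that of $P$ (as a sum of KT sets, with dimension parameter roughly $\min(2,\cdot)$ after accounting for multiplicity). Then I would apply the Fu--Ren bound for $(\delta,s)$-KT balls versus $(\delta,t)$-KT tubes, using the dictionary that the curves $\gamma_w$ behave like lines (they are plane sections, so a suitable rescaling straightens them locally). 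The two regimes of the Fu--Ren bound should produce exactly the two terms $\delta^{-(2d+2)}$ and $\delta^{-(4d-1)}$. These correspond respectively to the flat example, where tubes are nearly coincident on $\sqrt\delta$-sectors, and to the radial progression example, where there is multiplicity in the height coordinate.

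\textbf{Main obstacle.} The hard part is Step 2's verification of the non-concentration hypotheses for the tube family, together with the degenerate geometry. When $w$ is near the null cone ($Q(w)\approx0$), the plane $\Pi_w$ is nearly tangent to $\C$. The section $\gamma_w$ then degenerates to a nearly straight generator, or its $\delta$-neighbourhood thickens into a $\sqrt\delta$-sector, which is exactly the flat example. I would first split according to the size of $|Q(w)|$ and the angle of $\Pi_w$ to $\C$. In the transverse range I would apply Fu--Ren directly. In the near-tangent range I would rescale the $\sqrt\delta$-sector parabolically to a planar problem and use the KT condition at scale $\sqrt\delta$ to count, aiming to recover the $\delta^{-(2d+2)}$ term there.
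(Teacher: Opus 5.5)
Your opening reduction is fine: bounding the count uniformly in $p_6$ is equivalent to the paper's $\sum_q n(q)^2\le\sup_q n(q)\sum_q n(q)$. Step 1 also finds the right curves, since the paper's incidences are exactly $p_2\in\C\cap\Pi_{q-p_1}$. The gap is in Step 2. You pool all values of $\xi=p_4+p_5+p_6$ into one incidence problem, so your parameter $w=p_4+p_5+p_6-p_3$ ranges over a three-dimensional set. The curves $\gamma_w$ therefore form a three-parameter family: in coordinates $(1/r,\theta)$ they are the graphs $1/r=\alpha+\beta\cos\theta+\gamma\sin\theta$, which behave like circles, not lines. Fu--Ren is a theorem about lines, a two-parameter family, and no change of variables maps all these curves to lines. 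Straightening ``locally'' only works on arcs of length about the square root of the thickness, which destroys the count. The claimed KT property of the multiset $\{w\}$ is also unjustified: its multiplicities average at least $N^3\delta^3$. Controlling them needs information about the distribution of $P+P-P$ comparable to the energy bound itself.

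The missing idea is to keep the triple sum $q$ fixed and bound $n(q)$ uniformly. For fixed $q$ the curves $\{\C\cap\Pi_{q-p}\}_{p\in P}$ form a two-parameter family. The $q$-dependent Lorentz boost $A_q$ (sending $q$ to $(0,0,1)$), followed by $V(r,\theta)=\sqrt{2r/(1-2r)}\,e(\theta/2)$, converts $Q(q-p_1-p_2)=0$ exactly into $V(A_qp_1)\cdot V(A_qp_2)=\pm1$. This is a genuine point--line problem between two copies of one planar set. Your $p_6$-reduction then works if you bound each $r(\xi)$ by this uniform bound and count $(p_4,p_5)$ trivially inside the sector of $p_6$.

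Your scale analysis also misses the range that determines the answer. Suppose the triple has angular diameter $\sigma$ with $\sqrt\delta\ll\sigma\ll1$. Then $\Pi_w$ makes an angle $\lesssim\sigma$ with the tangent planes of $\C$, so the $\delta$-slab meets $\C$ in a neighbourhood of the conic of width $\gtrsim\delta/\sigma$, not $\delta$, and everything lives in a $\sigma$-sector. After normalising by $A_q$ (operator norm $\sim\sigma^{-2}$), the incidence thickness becomes $\eta=\delta/\sigma^2$ and the KT constant degrades to $K_\sigma\lesssim\sigma^{-d-1}$ (Lemmas~\ref{lem:boost} and \ref{lem:rescaleKT}). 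Fu--Ren then gives $n(q)\lesssim\delta^{-c-\varepsilon}\sigma^{-2cd}N_\tau^{2-2c}$. With $N_\tau\lesssim\delta^{-d}\sigma^{d-1}$ this yields $\E_\sigma\lesssim N\delta^{-3d-1/2-\varepsilon}\sigma^{2d-3}$ for $d\le3/2$, which is largest at $\sigma\sim\sqrt\delta$.

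So the $\delta^{-(2d+2)}$ term comes out of the whole dyadic range of $\sigma$. Your dichotomy (Fu--Ren ``directly'' in the transverse range, parabolic rescaling only at $\sqrt\delta$) has no mechanism for these intermediate scales. The genuinely near-null range needs no rescaling at all: all triples lie in one $\sqrt\delta$-sector and five points determine the sixth, giving $\sum_\tau N_\tau^5\lesssim N\delta^{-2d-2}$.
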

	
	\subsection{The conjectures for the parabolic cylinder and the parabola}
	Consider the truncated parabolic cylinder
	\[
	\mathcal Y=\{(x,x^2,z):x\in[0,1],\ z\in[0,1]\}\subset\mathbb R^3.
	\]
	The cylinder is often a good model for the cone, as  both surfaces have zero Gaussian curvature. The analogous conjecture we make for a $(\delta,d)$-KT set $S\subset\mathcal Y$, $1<d\le2$, is
	\begin{equation}\label{cylinderconj}
		\E_{3,\delta}(S)\lesssim_\varepsilon
		\delta^{-\varepsilon}\max\{\delta^{-(3d+2)},\delta^{-(5d-1)}\}.
	\end{equation}
	The two exponents are the same as for the cone, and their justification is also similar. The first obstruction comes from $\sqrt{\delta}$-wide strips of the cylinder, which are nearly planar at scale $\delta$. For the second, we use a set which projects to an arithmetic progression in the $x$-coordinate.  Both constructions yield the same exponents as above, via the same type of counting.
	
	The cylinder conjecture is at least as hard as the corresponding 3-energy conjecture for the parabola, introduced in  \cite{Orponen}.

	\begin{conjecture}[3-energy on the parabola]\label{conj:parabola}
		Let $0<s\le1$, and let $A\subset\{(x,x^2):0\le x\le1\}$ be a
		$\delta$-separated $(\delta,s,C)$-KT set. Then for every $\varepsilon>0$,
		\begin{equation}\label{parabconj}
			\E_{3,\delta}(A)\lesssim_{\varepsilon,C,C_*}
			\delta^{-\varepsilon}\max\{\delta^{-3s},\delta^{-(5s-1)}\}.
		\end{equation}
	\end{conjecture}

	Indeed, let $A\subset\{(x,x^2):0\le x\le1\}$ be a maximal $(\delta,s)$-KT set, where $s=d-1$, and let $Z$ be a $\delta$-arithmetic progression of length comparable to $\delta^{-1}$. Then $S=A\times Z$, with the coordinates ordered as above, is a $(\delta,s+1)$-KT set. Each approximate additive 6-tuple in $A$ can be paired with $\gtrsim\delta^{-5}$ exact additive 6-tuples in $Z$. Thus
	\[
	\E_{3,\delta}(S)\gtrsim\delta^{-5}\E_{3,c\delta}(A)
	\]
	for a fixed  $c=O(1)$. Applying \eqref{cylinderconj} with $d=s+1$ would give
	\begin{equation}\label{parabconj}
		\E_{3,\delta}(A)\lesssim_\varepsilon
		\delta^{-\varepsilon}\max\{\delta^{-3s},\delta^{-(5s-1)}\},\qquad 0<s\le1,
	\end{equation}
	which is precisely the conjectured parabola numerology. This gives a concrete sense in which the cylinder problem is harder. We do not know a comparable implication from the cone estimate to the parabola estimate.
	
	Orponen \cite{Orponen} initiated the study of the $\delta$-energy problem for the parabola, deriving a power improvement over the (comparatively speaking, elementary) fourth-moment estimate, using Furstenberg-set theory. Subsequent progress is described in \cite{DO}, where O'Regan and the author prove the bound $\E_{3,\delta}(A)\lesssim_\varepsilon\delta^{-7s/2-\varepsilon}$ for $s\le2/3$. We refer to that paper for the intervening results and their references. More recently, Orponen and Shmerkin \cite[Theorem~6.9]{OS} proved the sharp triple-sumset dimension formula $\dim_{\mathrm H}(A+A+A)=3s$ for every $s$-Ahlfors--David regular subset $A$ of the parabola with $0<s\le\tfrac12$. This Hausdorff-dimension result does not by itself imply the discretized energy estimate in Conjecture~\ref{conj:parabola}.
	
	There is a contrast with familiar Fourier-analytic estimates. The cone and parabolic cylinder share similar critical exponents for a multitude of problems: they have the same critical $\ell^2L^6$ decoupling exponent, and comparable critical restriction and $L^4$ square-function phenomena. The cone is usually (both conceptually and technically) much harder, notably in reverse square-function estimates, see \cite{GWZ} and \cite{CDW}, because its curved and flat directions interact under rescaling. In our  context however,  the cone energy admits a full resolution, whereas the cylinder conjecture contains the unsolved parabola energy problem. 
	
	\begin{remark}[Energy and the ring problem]
		For the parabola, Orponen's improvement over the elementary energy exponent is connected to the discretized ring problem and to Furstenberg-set estimates. This is one motivation for the Furstenberg-theoretic methods used in \cite{DO}. We do not know an analogous implication from improved energy on the cone to a ring or sum--product theorem. The radial variable and the angular variable do not combine into the same one-dimensional additive--multiplicative structure. Our proof below uses planar incidences of lower complexity than the Furstenberg-set theorems known to have non-trivial sum-product consequences.
	\end{remark}
	
	\section{A warm-up: exact energy ($\delta=0$) and Szemer\'edi--Trotter}
	We explain how the argument works in the technically simpler case when $\delta=0$. Let $P\subset\C$ be finite, without any non-concentration assumptions. For each angle $\theta$, the segment
	\[
	L_\theta=\{(r\cos\theta,r\sin\theta,r):\tfrac12\le r\le1\}
	\]
	is called a \emph{generator} of the truncated cone.  Let $M$ be the largest number of points of $P$ on any generator, and write
	\[
	\E_3(P)=\#\{(p_1,\ldots,p_6)\in P^6:
	p_1+p_2+p_3=p_4+p_5+p_6\}.
	\]
	We prove the discrete analog of Theorem \ref{thm:main}.
	\begin{proposition}\label{prop:exact}
		For every finite $P\subset\C$,
		\[
		\E_3(P)\lesssim N^{13/3}+NM^4.
		\]
	\end{proposition}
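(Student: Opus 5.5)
The plan is to write $\E_3(P)=\sum_s r(s)^2$, where
\[
r(s)=\#\{(p_1,p_2,p_3)\in P^3:\ p_1+p_2+p_3=s\},
\]
and to bound $r(s)$ for a fixed sum $s$ by a planar incidence theorem. The key algebraic observation concerns a fixed $w\in\R^3$. If $p$ and $w-p$ both lie on the cone $\{Q=0\}$, then subtracting $Q(p)=0$ from $Q(w-p)=0$ gives the linear equation
\[
2B(w,p)=Q(w),
\]
where $B$ is the symmetric bilinear form associated with $Q$. So $p$ lies on the conic $\gamma_w=\C\cap H_w$, with $H_w=\{p:2B(w,p)=Q(w)\}$. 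For fixed $s$, a triple with sum $s$ is therefore determined by an incidence between the point $p_1\in P$ and the curve $\gamma_{s-p_3}$, $p_3\in P$; the remaining point $p_2=s-p_3-p_1$ is then determined. Hence $r(s)$ is at most the number of incidences between the $N$ points of $P$ and the $N$ curves $\{\gamma_{s-p}:p\in P\}$.

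I would transfer these incidences to the plane using the $(\theta,r)$ parametrization, or equivalently the coordinates $(\cos\theta,\sin\theta,1/r)$, in which each $\gamma_w$ becomes an algebraic curve of bounded degree. I would then apply a Szemer\'edi--Trotter type bound for curves with two degrees of freedom (Pach--Sharir). This requires checking the two degree-of-freedom axioms, provided the curve is \emph{non-degenerate}, meaning $Q(s-p_3)\neq0$:
\begin{itemize}[leftmargin=*]
\item Two distinct non-degenerate conics meet in at most $2$ points. Two distinct planes meet in a line, which meets $\C$ in at most $2$ points unless it is a generator, and a generator passes through the origin, so it cannot lie in $H_w$ when $Q(w)\ne0$.
\item Two points of $\C$ lie on $O(1)$ of the curves $\gamma_{s-p}$. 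Here the family is indexed by $p\in\C$, a $2$-parameter family, and two incidence conditions cut this down to a bounded-degree $0$-dimensional set. One must verify this is not a curve; this is where I would do an explicit computation.
\end{itemize}
The incidence bound then gives
\[
r_{\rm good}(s)\lesssim N^{2/3}N^{2/3}+N\lesssim N^{4/3}.
\]
Summing $r_{\rm good}(s)\cdot r(s)$ over $s$, and using $\sum_s r(s)=N^3$, contributes $N^3\cdot N^{4/3}=N^{13/3}$.

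The degenerate triples are those with $Q(s-p_3)=0$, i.e.\ $w=p_1+p_2\in\C$. A sum of two cone points lies on the cone only if both points lie on the same generator (strict convexity transverse to generators). So the bad triples are exactly those with $p_1,p_2$ on a common generator, and there are at most $NM$ such ordered pairs $(p_1,p_2)$. By Cauchy--Schwarz it suffices to bound $\sum_s r_{\rm bad}(s)^2$, which counts $6$-tuples in which both $(p_1,p_2)$ and $(p_4,p_5)$ are collinear along generators. The target is the bound $NM^4$ here, possibly plus $N^{13/3}$.

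I expect this degenerate count to be the main obstacle. The naive estimate counts at most $N^2M$ triples $(p_1,p_2,p_3)$, then $N$ choices of $p_6$ and at most $M$ choices of $p_4$ on the generator of $s-p_6$. This gives $N^3M^2$, which is too weak in the range $N^{2/3}<M<N$. To improve it, I would exploit the structure: $p_1+p_2$ runs over points of a generator $L_\theta$ scaled by $2$. The equation
\[
(p_1+p_2)-(p_4+p_5)=p_6-p_3
\]
then says that a difference of two points on dilated generators equals a difference of two cone points. I would split according to whether $\theta$ equals the generator angle for $(p_4,p_5)$.
\begin{itemize}[leftmargin=*]
\item \emph{Same generator.} Then $p_6-p_3$ is parallel to that generator, so $p_3,p_6$ also lie on a common generator. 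The whole configuration then lives on one generator, which is a $1$-dimensional problem with at most $M$ points on the line and $\lesssim M^5$ solutions. Summing over at most $N/M$ rich generators, or more precisely using $\sum_\theta M_\theta^5\le M^4N$, gives $NM^4$.
\item \emph{Different generators.} Here I would rerun the incidence argument with the roles of $p_3$ and $p_6$ fixed. Fixing $p_3,p_6$, the sums $p_1+p_2$ and $p_4+p_5$ lie on two transverse lines, which should force $O(1)$ choices up to the generator-pair data. I would try to show this contributes at most $N^{13/3}$.
\end{itemize}
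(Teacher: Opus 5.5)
Your route is viable and differs from the paper's. The step you flagged as the main obstacle closes in one line, so what remains is an unfinished count rather than a genuine gap.

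\textbf{Closing your different-generators case.} Fix $p_3,p_6$ and generators $\alpha\neq\beta$, and write $n_\theta=(e(\theta),1)$. You need $\lambda n_\alpha-\mu n_\beta=p_6-p_3$. Since $n_\alpha,n_\beta$ are linearly independent, this determines both sums $p_1+p_2$ and $p_4+p_5$. Then $p_1\in P\cap L_\alpha$ and $p_4\in P\cap L_\beta$ determine $p_2$ and $p_5$. With $M_\theta=\#(P\cap L_\theta)$, this case is at most $N^2(\sum_\alpha M_\alpha)^2=N^4\le N^{13/3}$, and no incidence argument is needed.

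\textbf{A slip in the same-generator case.} You overlooked $p_6=p_3$; then $p_3$ need not lie on $L_\theta$. This adds at most $N\sum_\theta M_\theta^3\le N^2M^2\le N^4$, which is harmless.

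\textbf{The second Pach--Sharir axiom.} It needs no explicit computation: $x\in\gamma_{s-p}$ iff $Q(s-x-p)=0$ iff $p\in\gamma_{s-x}$. So it is essentially dual to the first axiom.

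\textbf{Comparison with the paper.} The paper avoids degenerate triples altogether by splitting $\sum_q n(q)^2$ according to $Q(q)$, not according to a fixed pair.
\begin{itemize}
\item If $Q(q)=0$, the identity $Q(p_1+p_2+p_3)=2\sum_{i<j}r_ir_j(1-\cos(\theta_i-\theta_j))$ puts every contributing triple on a single generator. This gives $NM^4$ at once.
\item If $Q(q)>0$, the paper applies $A_q$. Then $Q(q-p_a)=h^2(1-2r_a')$, so your degeneracy for the conic indexed by $p_a$ is exactly $r_a'=1/2$. Since $r_1'+r_2'+r_3'=1$, at most one index is degenerate.
\item The paper works with the two points having $r'<1/2$. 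The map $V$ straightens your conics $\gamma_{q-p}$ into the lines $V(p_1)\cdot v=\pm1$, so ordinary Szemer\'edi--Trotter gives $n(q)\lesssim N^{4/3}$.
\end{itemize}

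In your own language: when $Q(s)>0$, at most one of the three pairs in a triple lies on a common generator, because distinct generators of $\C$ are disjoint. So you could always index the conic by a point whose complementary pair is transversal, and your whole bad-triple analysis would disappear.

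\textbf{What each route buys.} Your conic/Pach--Sharir argument is the alternative the paper itself mentions, and it is perfectly adequate for $\delta=0$. The paper prefers the linearization because it survives thickening. In Section 3, the same pair selection (quantified as $r'\in[1/48,1/2-1/48]$) and the map $V$ turn $O(\delta)$-approximate incidences into point--tube incidences with Katz--Tao control. This is what allows the Fu--Ren theorem for straight tubes to be applied.
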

	\begin{remark}
		The estimate in Proposition \ref{prop:exact} serves only as a warm-up for the transformation and -to some extent- for the incidence mechanism used below. Jing and Wu \cite{JW} recently proved that, for a finite set $P$ on a fixed irreducible algebraic surface of degree at least two (such as the cone), its fourth energy obeys
		\[
		\E_2(P)\lesssim_\varepsilon M N^{2+\varepsilon},
		\]
		where $M\ge1$ bounds the number of points on any line contained in the surface. Applying Young's convolution inequality gives
		\[
		\E_3(P)=\|1_P*1_P*1_P\|_{\ell^2}^2
		\le N^2\|1_P*1_P\|_{\ell^2}^2
		\lesssim_\varepsilon M N^{4+\varepsilon}.
		\]
		This is stronger than the $N^{13/3}$ term of Proposition \ref{prop:exact} for bounded $M$, though the bounds have different advantages when $M$ is large. We present the elementary Szemer\'edi--Trotter proof solely to introduce the nonlinear incidence transformation needed in the $\delta$-problem.
	\end{remark}

	In preparation for the proof of Proposition \ref{prop:exact}, we distinguish the two  possible cases for the triple sum of points on the cone. If
	$ p_i=r_i(e(\theta_i),1)$ with $\frac12\le r_i\le 1$ and $q=p_1+p_2+p_3$, then the definition of $Q$ gives
	\begin{align}
		Q(q)&=(r_1+r_2+r_3)^2-\left|\sum_{i=1}^3r_ie(\theta_i)\right|^2\notag\\
		&=2\sum_{1\le i<j\le3}r_ir_j\bigl(1-\cos(\theta_i-\theta_j)\bigr).\label{nullidentity}
	\end{align}
	To explain the second equal sign, expand the two squares separately:
	\begin{align*}
		(r_1+r_2+r_3)^2
		&=\sum_{i=1}^3r_i^2+2\sum_{i<j}r_ir_j,\\
		\left|\sum_{i=1}^3r_ie(\theta_i)\right|^2
		&=\sum_{i=1}^3r_i^2+2\sum_{i<j}r_ir_j\,e(\theta_i)\cdot e(\theta_j)\\
		&=\sum_{i=1}^3r_i^2+2\sum_{i<j}r_ir_j\cos(\theta_i-\theta_j).
	\end{align*}
	Here we used $|e(\theta)|=1$ and
	$e(\theta_i)\cdot e(\theta_j)=\cos(\theta_i-\theta_j)$.
	Subtracting gives \eqref{nullidentity}.
	Thus $Q(q)\ge0$. 
	
	We call a sum $q$ \emph{zero} if $Q(q)=0$ and \emph{nonzero} if $Q(q)>0$.
	Since every summand in \eqref{nullidentity} is nonnegative, a sum is zero exactly when all three
	points lie on one generator. Nonzero means that at least two of their angular directions differ.
	Only the nonzero case requires incidence geometry, while the zero case is trivial. We start with the nonzero case. 
	
	For $q\in P+P+P$ write
	\[
	n(q)=\#\{(p_1,p_2,p_3)\in P^3:p_1+p_2+p_3=q\}.
	\]
	Then $\E_3(P)=\sum_qn(q)^2$ and $\sum_qn(q)=N^3$. The distinguished feature of our argument is that we will derive uniform bounds for $n(q)$, when $Q(q)>0$. This allows us to fix $q$.
	We will apply two transformations. The first one normalizes the geometry of points, to lay the ground for uniform estimates for $n(q)$. The second one recasts triple sums contributing to $n(q)$ as planar incidence estimates between points and lines. 
	
	\subsection{A fixed sum and the linear  Lorentz rescaling}

	Write $q=(q_1,q_2,t)$. Since the cone and its truncation are invariant under rotations in the horizontal $(x,y)$-plane, we may rotate the coordinates so that $q=(s,0,t)$, where $s=\sqrt{q_1^2+q_2^2}\ge0$. This rotation preserves distances and additive relations. For a nonzero sum $Q(q)>0$, we have $t>s$. Set $h=\sqrt{t^2-s^2}$ and consider the Lorentz rescaling
	\begin{equation}\label{lorentz}
		B_q(x,y,z)=\left(\frac{tx-sz}{h},y,\frac{tz-sx}{h}\right),
		\qquad A_q=h^{-1}B_q.
	\end{equation}
	The map $A_q$ is linear and satisfies $A_qq=(0,0,1)$. Moreover, $Q(A_qp)=h^{-2}Q(p)$, so $A_q$ maps the upper cone to itself. By linearity, if $p_1+p_2+p_3=q$, then
	\[
	A_qp_1+A_qp_2+A_qp_3=(0,0,1).
	\]
	
	Apply $A_q$ to all three points. Their images still lie on the upper cone, and their sum is now $(0,0,1)$. We may therefore write
	\[
	A_qp_i=r_i'(e(\theta_i'),1).
	\]
	The primed radii and angles generally differ from the original ones; in particular, the transformed radii need not belong to $[1/2,1]$. 
	For points in a triple with sum $q$, however, they satisfy absolute bounds. Indeed, writing the original points as $p_i=r_i(e(\theta_i),1)$, the formula for $A_q$ gives
	\[
	r_i'=\frac{r_i(t-s\cos\theta_i)}{h^2}
	=\frac{r_i}{t+s}+\frac{r_is(1-\cos\theta_i)}{h^2}
	\ge \frac{r_i}{t+s}\ge\frac1{12}.
	\]
	Here we used $r_i\ge1/2$, $t=r_1+r_2+r_3\le3$, and $0\le s<t$, hence $t+s\le6$. On the other hand, the normalized sum equation gives
	\[
	r_1'+r_2'+r_3'=1,
	\qquad \sum_{i=1}^3r_i'e(\theta_i')=0.
	\]
	Thus, for each $i$, the triangle inequality implies
	\[
	r_i'=\left|\sum_{j\ne i}r_j'e(\theta_j')\right|
	\le\sum_{j\ne i}r_j'=1-r_i',
	\]
	so $r_i'\le1/2$. We have proved that every point in a contributing triple to $q$ has
	\[
	\frac1{12}\le r_i'\le\frac12.
	\]
	\begin{remark}
		\label{hdufherugy8t9ug90tru}
		We record a more refined observation. This is not needed in the discrete argument presented in this section, but will be needed when we deal with $\delta$-approximate incidences.
		
		If the transformed radii are ordered as $r_{(1)}'\ge r_{(2)}'\ge r_{(3)}'$, then
		$2r_{(2)}'+1/12\le1$. In particular, at least two of the three transformed radii belong to
		$[1/12,11/24]\subset[1/24,1/2-1/24]$.
	\end{remark}

	To simplify notation, from now on we omit the primes and write $p_i=r_i(e(\theta_i),1)$ for the transformed points.
	
	\subsection{The nonlinear change of coordinates}
	
	At this point, we could recast  a triple contributing to $q$ as an incidence between points and ellipses having two degrees of freedom. The desired estimate would be a consequence of the generalized Szemer\'edi--Trotter theorem due to Pach and Sharir, \cite{PS}. We instead follow an alternative approach. The role of the nonlinear map $V$ is to straighten ellipses into lines. This perspective will be useful later when we estimate the $\delta$-energy, and apply an incidence theorem available for straight tubes.

	For the transformed sum $q=(0,0,1)$, direct calculation gives
	\begin{align}
		Q(q-p_1-p_2)
		&=1-2(r_1+r_2)+2r_1r_2(1-\cos(\theta_1-\theta_2))\label{compat}\\
		&=4r_1r_2\left[\left(\frac1{2r_1}-1\right)
		\left(\frac1{2r_2}-1\right)-\cos^2\frac{\theta_1-\theta_2}{2}\right].\nonumber
	\end{align}
	For $0<r<1/2$, choose a continuous angular coordinate $\theta$ on a fixed interval of length less than $2\pi$ and define the non-linear 
	\begin{equation}\label{Vmap}
		V(p)=V(r,\theta)=\sqrt{\frac{2r}{1-2r}}\ e(\theta/2),
		\qquad p=r(e(\theta),1).
	\end{equation}
	The notation $V(p)$ means that the two parameters $r$ and $\theta$ are the radial and angular coordinates of the cone point $p$; thus $V(p_i)=V(r_i,\theta_i)$. The angle is defined modulo $2\pi$, but $e(\theta/2)$ changes sign when $\theta$ is increased by $2\pi$. We therefore use finitely many angular intervals on which $\theta$ is chosen continuously, making $V$ single-valued.

	Since \(r_1+r_2+r_3=1\) and all three radii are positive, at most one of them can be greater than or equal to (or better said, equal to) \(1/2\). Thus every triple has at least two points whose transformed radii belong to \((0,1/2)\), where \(V\) is well-defined. We apply \(V\) only to these two points, which we choose to be $p_1$, $p_2$,  and split the count according to the three possible choices of pair. The third point is uniquely determined by the sum equation. Later, in Section 3.3, we will take the two selected radii in the compact sub-interval \([1/48,1/2-1/48]\), which is needed for the quantitative distortion estimates.
	
	We next turn the equality 
	$$p_1+p_2+p_3=q=(0,0,1)$$
	into an incidence between a point (namely $V(p_2)$) and a line parametrized by $V(p_1)$. The role of $p_3$ will be solely coming from the fact that it belongs to the cone. The fact that it is in $P$ is irrelevant.  
	
	Set
	\[
	a_i=\frac{1-2r_i}{2r_i}>0,\qquad u_i=V(p_i)
	=a_i^{-1/2}e(\theta_i/2),\qquad i=1,2.
	\]
	Using $e(\alpha)\cdot e(\beta)=\cos(\alpha-\beta)$, we have
	\[
	u_1\cdot u_2=\frac{\cos((\theta_1-\theta_2)/2)}{\sqrt{a_1a_2}},
	\qquad
	\cos^2\frac{\theta_1-\theta_2}{2}
	=a_1a_2(u_1\cdot u_2)^2.
	\]
	Substituting into \eqref{compat} gives
	\[
	\begin{aligned}
		Q(q-p_1-p_2)
		&=4r_1r_2a_1a_2\bigl(1-(u_1\cdot u_2)^2\bigr)\\
		&=(1-2r_1)(1-2r_2)\bigl(1-(V(p_1)\cdot V(p_2))^2\bigr).
	\end{aligned}
	\]
	Since $0<r_1,r_2<1/2$, the prefactor is nonzero. Hence
	$Q(q-p_1-p_2)=0$ if and only if
	\begin{equation}\label{linecondition}
		V(p_1)\cdot V(p_2)=\pm1.
	\end{equation}
	Fix $p_1$ and write $V(p_1)=(a,b)$ and $V(p_2)=(x,y)$. Since
	$V(p_1)\ne0$, equation \eqref{linecondition} says
	\[
	ax+by=1\qquad\text{or}\qquad ax+by=-1.
	\]
	Each equation describes a line in the $(x,y)$-plane. Thus, for a fixed choice
	of sign, $p_1$ determines a line $\ell_{p_1}$, and every compatible $p_2$
	gives an incidence $V(p_2)\in\ell_{p_1}$.
	
	We check that no multiplicities are introduced. First, the map
	$p\mapsto V(p)$ is injective. Indeed,
	\[
	|V(p)|^2=\frac{2r}{1-2r},
	\]
	and the right-hand side is strictly increasing for $0<r<1/2$.
	Hence $V(p)$ determines $r$. Its direction determines $\theta/2$, and
	therefore $\theta$, once a continuous choice of angle has been fixed.
	Second, for a fixed sign the line $ax+by=\pm1$ determines $(a,b)$
	uniquely: the right-hand side is fixed and nonzero. Hence the map from
	$p_1$ to its line is injective as well. We count the two signs separately,
	so that writing $u\cdot v=1$ and $(-u)\cdot v=-1$ does not identify two
	lines within the same family.

	\subsection{Nonzero sums}
	\begin{lemma}\label{lem:exacttime}
		For every $q$ with $Q(q)>0$, $n(q)\lesssim N^{4/3}$.
	\end{lemma}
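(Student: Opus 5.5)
We fix $q$ with $Q(q)>0$ and apply the rotation and the Lorentz map $A_q$ of \eqref{lorentz}. This is a linear bijection sending $q$ to $(0,0,1)$ and $P$ to a set $P'=A_qP$ of the same cardinality $N$ on the upper cone, so $n(q)$ equals the number of triples in $P'$ with sum $(0,0,1)$. Every point of such a triple has transformed radius in $[1/12,1/2]$. At most one radius in a triple equals $1/2$, so we split the count according to which pair $(p_1,p_2)$ of indices has both radii in $(0,1/2)$. There are three such choices. Within each choice the third point $p_3=q-p_1-p_2$ is determined by $(p_1,p_2)$. Hence $n(q)$ is at most $3$ times the number of ordered pairs $(p_1,p_2)\in P'^2$ with radii in $(0,1/2)$ for which $q-p_1-p_2$ lies on the cone. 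We may drop the requirement $p_3\in P'$, since this only enlarges the count.

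We also split $P'$ into $O(1)$ pieces according to the angular intervals on which $\theta$ is chosen continuously, so that $V$ is single-valued on each piece. Take a pair $(p_1,p_2)$ with $q-p_1-p_2$ on the cone, which gives $Q(q-p_1-p_2)=0$. By the identity derived before \eqref{linecondition}, the prefactor $(1-2r_1)(1-2r_2)$ is nonzero, so $V(p_1)\cdot V(p_2)=\pm1$.

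For each sign we now have a configuration of points and lines. The points are $\mathcal P=\{V(p):p\in P'\}$, of which there are at most $N$ because $V$ is injective. The lines are $\mathcal L_\pm=\{\ell_{p_1}^\pm:p_1\in P'\}$, where $\ell^\pm_{p_1}=\{v:V(p_1)\cdot v=\pm1\}$. There are at most $N$ of these, and $p_1\mapsto\ell^\pm_{p_1}$ is injective, as checked above. Each admissible pair $(p_1,p_2)$ with the given sign and the given angular pieces yields a distinct incidence $(V(p_2),\ell^\pm_{p_1})$, since both maps are injective. Therefore the number of such pairs is at most $I(\mathcal P,\mathcal L_\pm)$.

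The Szemer\'edi--Trotter theorem gives
\[
I(\mathcal P,\mathcal L_\pm)\lesssim N^{2/3}N^{2/3}+N+N\lesssim N^{4/3}.
\]
We sum over the $O(1)$ choices of pair of indices, sign, and angular pieces, which gives $n(q)\lesssim N^{4/3}$. The triple must satisfy the nonzero condition for the transformation to be defined, but that is exactly the hypothesis $Q(q)>0$. The only delicate point is the bookkeeping that the correspondence from pairs to incidences is injective. This has already been verified via the injectivity of $V$ and of $p_1\mapsto\ell_{p_1}$, so no step here is a genuine obstacle. The substance lies in the transformation already carried out.
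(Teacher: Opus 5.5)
Your proof is correct and follows the paper's argument. You Lorentz-normalize $q$ to $(0,0,1)$, split according to the pair of indices with transformed radii in $(0,1/2)$ and according to angular chart, convert compatibility into $V(p_1)\cdot V(p_2)=\pm1$, and apply Szemer\'edi--Trotter to at most $N$ points and $N$ lines, with the third point determined by the first two. One cosmetic fix: the point set should be $\{V(p): p\in P',\ 0<r'<1/2\}$ rather than $\{V(p):p\in P'\}$, since $V$ is undefined at transformed radius $r'\ge 1/2$.
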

	\begin{proof}
		We recall the reduction from the previous two subsections. Since $Q(q)>0$, after rotating the horizontal coordinates we may write $q=(s,0,t)$ with $h=\sqrt{t^2-s^2}>0$. Thus the Lorentz map $A_q$ is well-defined, maps the upper cone to itself, and satisfies $A_qq=(0,0,1)$. By linearity, a triple with $p_1+p_2+p_3=q$ satisfies $A_qp_1+A_qp_2+A_qp_3=(0,0,1)$. Writing the transformed points as $r_i(e(\theta_i),1)$, comparison of the third coordinates gives $r_1+r_2+r_3=1$. In particular, at least two radii are strictly smaller than $1/2$. This is needed in order to make the use of $V$ possible.  Split according to the choice of the pair. For each such pair, \eqref{linecondition} gives incidences between at most $N$ distinct points and at most $N$ distinct lines. Szemer\'edi--Trotter gives $O(N^{4/3}+N)$ incidences. Once the pair is fixed, the third point is uniquely determined. There are only three choices of pair and finitely many charts.
	\end{proof}
	It follows immediately that
	\begin{equation}\label{extime}
		\sum_{Q(q)>0}n(q)^2\le(\sup_{Q(q)>0}n(q))\sum_qn(q)
		\lesssim N^{13/3}.
	\end{equation}
	
	\subsection{Zero sums}
	\begin{lemma}\label{lem:exactnull}
		The contribution to $\E_3(P)$ from $Q(q)=0$ is $O(NM^4)$.
	\end{lemma}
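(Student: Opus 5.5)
We need to bound $\sum_{Q(q)=0} n(q)^2$ by $O(NM^4)$. By the identity \eqref{nullidentity}, every summand $2r_ir_j(1-\cos(\theta_i-\theta_j))$ is nonnegative. Hence $Q(p_1+p_2+p_3)=0$ forces all three points to lie on one generator $L_\theta$. In that case the sum $q=(r_1+r_2+r_3)(e(\theta),1)$ is itself a point of the ray through $L_\theta$, with $Q(q)=0$. So a zero sum $q$ determines its generator $\theta$: it is the direction of $(q_1,q_2)$, and $q\neq 0$ because $t\ge 3/2$. The plan is to decompose the zero-sum contribution generator by generator.

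\textbf{Step 1 (reduction to generators).} Let $P_\theta=P\cap L_\theta$ and $N_\theta=\#P_\theta\le M$, so that $\sum_\theta N_\theta=N$. Consider a 6-tuple with $p_1+p_2+p_3=p_4+p_5+p_6=q$ and $Q(q)=0$. Both triples must lie on a single generator, and that generator is determined by $q$, so all six points lie in the same $P_\theta$. Thus the zero-sum contribution is at most $\sum_\theta \E_3(P_\theta)$, where $\E_3(P_\theta)$ is the exact 3-energy of a set on a line segment.

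\textbf{Step 2 (trivial bound on a line).} For a set of $N_\theta$ points on a line, once $p_1,\dots,p_5$ are chosen, the sixth point is determined by $p_6=p_1+p_2+p_3-p_4-p_5$. Hence $\E_3(P_\theta)\le N_\theta^5$.

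\textbf{Step 3 (summation).} Summing over generators,
\[
\sum_\theta N_\theta^5\le M^4\sum_\theta N_\theta = NM^4.
\]

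There is no real obstacle here. The only points needing care are the observation that $q$ determines its generator, so that no cross-generator tuples arise, and the fact that equality in \eqref{nullidentity} requires $\cos(\theta_i-\theta_j)=1$ for all pairs because the radii are positive. Together these give the claimed bound $O(NM^4)$.
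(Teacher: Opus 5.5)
Your proof is correct and follows the paper's argument essentially verbatim. Identity \eqref{nullidentity} confines each zero-sum triple to a single generator determined by $q$, a generator with $m$ points contributes at most $m^5$, and $\sum m^5\le M^4\sum m=NM^4$; your explicit check that $q\neq 0$ and that its horizontal direction recovers the generator is a small detail the paper leaves implicit.
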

	\begin{proof}
		By \eqref{nullidentity}, $Q(q)=0$ forces all three points of a contributing triple to lie on the same generator. Two triples with the same zero sum lie on the same generator as each other. If a generator contains $m$ points, its energy is at most $m^5$ because five points determine the sixth. Summing over generators gives $\sum m^5\le M^4\sum m=NM^4$.
	\end{proof}
	\begin{proof}[Proof of Proposition \ref{prop:exact}]
		Combine \eqref{extime} and Lemma \ref{lem:exactnull}.
	\end{proof}

	\begin{remark}[Exact and thickened energy]
		For a $\delta$-separated maximal-size KT set one always has $M\lesssim\delta^{-1}$, and the exact theorem gives $\E_3(P)\lesssim\delta^{-13d/3}+\delta^{-(d+4)}$. This is not the sharp thickened exponent. In the latter problem a $\sqrt\delta$-wide plate, not merely a generator, behaves as a plane at tolerance $O(\delta)$. Ordinary Szemer\'edi--Trotter is therefore not the needed incidence input.
	\end{remark}

	\section{The  $\delta$-energy estimate}
	This section is devoted to proving Theorem \ref{thm:main}. All implicit constants  may depend on $C$ and $C_*$.
	Partition the bounded region containing $P+P+P$ into cubes of side $c\delta$, with $c>0$ fixed. Let $n(q)$ be the number of triples with sum in the cube $q$. We may write
	\begin{equation}\label{energycubes}
		\E_{3,\delta}(P)\lesssim\sum_{q}n(q)^2,\qquad
		\sum_qn(q)=N^3.
	\end{equation}
	For a triple $(p_1,p_2,p_3)$, its \emph{angular diameter} is
	\[
	\operatorname{diam}_{\rm ang}(p_1,p_2,p_3)
	:=\max_{1\le i<j\le3}\operatorname{dist}_{\mathbb S^1}
	(\theta_i,\theta_j),
	\]
	where $\operatorname{dist}_{\mathbb S^1}$ is the shorter angular distance.
	We say that the triple has angular diameter $\sigma$ when this number lies
	between  $\sigma/2$ and $\sigma$. Identity
	\eqref{nullidentity}, together with $1/2\le r_i\le1$, shows that
	$Q(p_1+p_2+p_3)\asymp\sigma^2$: the largest angular separation contributes
	$\sim\sigma^2$, while the other two contribute $O(\sigma^2)$.
	
	Assume now $\sigma^2\gg\delta$. For a sum cube  $q$ and for a triple satisfying
	$$p_1+p_2+p_3=q+O(\delta), \;\operatorname{diam}_{\rm ang}(p_1,p_2,p_3)\asymp \sigma$$
	we may still conclude that 
	$$Q(q)\asymp \sigma^2,$$
	since $Q(q)=Q(p_1+p_2+p_3)+O(\delta)$.
	The scales below $C_0\sqrt\delta$, with $C_0$ a sufficiently large
	fixed constant, are included in the near-zero contribution of
	Lemma \ref{lem:nearnull}: their sums satisfy $|Q(q)|=O(\delta)$.
	We apply Lorentz normalization only when
	$\sigma\ge C_0\sqrt\delta$. We work dyadically
	in this remaining range $C_0\sqrt\delta\le\sigma\lesssim1$.
	
	\begin{lemma}[The sector determined by a sum cube]\label{lem:qsector}
		Fix $\sqrt\delta\ll\sigma\lesssim1$. There is a boundedly
		overlapping family $\mathcal T_\sigma$ of angular arcs $\tau$ of length
		$\asymp\sigma$ and a rule assigning each sum cube $q$  with $|Q(q)|\asymp\sigma^2$ to one arc
		$\tau(q)\in\mathcal T_\sigma$,
		with the following property. If
		\[
		|Q(q)|\asymp\sigma^2,\qquad
		p_1+p_2+p_3=q+O(\delta),
		\]
		then the directions of $p_1,p_2,p_3$ all lie in $\tau(q)$.
		
		When $|Q(q)|=O(\delta)$, we may use a family $\mathcal T_{\sqrt{\delta}}$ consisting of arcs $\tau$ of length $\asymp \sqrt{\delta}$.
	\end{lemma}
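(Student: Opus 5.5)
The arc is read off from the horizontal direction of the sum $q$ itself. Write $q=(q',t)$ with $q'\in\R^2$ and $t\in[3/2,3]$. Let $\theta(q)$ be the direction of $q'$; this is well defined, since $|q'|\ge t-O(\sigma^2)\gtrsim 1$. Let $\tau(q)$ be the arc centered at $\theta(q)$ of length $K\sigma$, with $K$ a large absolute constant. To get a boundedly overlapping family, I would first fix a $\sigma$-net of angles, then round $\theta(q)$ to the nearest net point, and take $\mathcal T_\sigma$ to be the arcs of length $(K+2)\sigma$ centered at the net points. This family overlaps boundedly, with overlap depending only on $K$. The whole task then reduces to one quantitative claim: every $p_i$ in a contributing triple has direction within $O(\sigma)$ of $\theta(q)$.

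To prove the claim, write $p_i=r_i(e(\theta_i),1)$ and $\tilde q=p_1+p_2+p_3=q+O(\delta)$.

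First, I would control the angular diameter of the triple from $Q$. Since $Q$ is Lipschitz on the bounded region, $Q(\tilde q)=Q(q)+O(\delta)$. The hypotheses $|Q(q)|\asymp\sigma^2$ and $\sigma^2\gg\delta$ then give $Q(\tilde q)\lesssim\sigma^2$. By \eqref{nullidentity} and $r_ir_j\ge1/4$, each term satisfies $1-\cos(\theta_i-\theta_j)\lesssim\sigma^2$. Hence $\operatorname{dist}_{\mathbb S^1}(\theta_i,\theta_j)\lesssim\sigma$ for all $i,j$, with the branch of small differences fixed by the fact that $\sigma\lesssim1$ is small relative to $\pi$. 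If $\sigma$ is comparable to $1$, the lemma is trivial: take $\tau$ to be the whole circle, or a bounded number of arcs of length $\asymp 1$ with $\tau(q)$ the one containing everything, after adjusting the constant in $\asymp$.

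Second, I would compare the directions of the $p_i$ with the direction of $\tilde q'=\sum r_ie(\theta_i)$. Since all $\theta_i$ lie in an arc of length $O(\sigma)$ and the $r_i$ are positive, $\tilde q'$ is a positive combination of unit vectors in that arc. Its direction therefore lies in the convex hull of the arc, which is the same arc. So $|\arg\tilde q'-\theta_i|\lesssim\sigma$. Finally, $|q'-\tilde q'|=O(\delta)$ and $|\tilde q'|\gtrsim1$, so $\arg q'=\arg\tilde q'+O(\delta)$, and $\delta\le\sigma$. Combining, $|\theta_i-\theta(q)|\lesssim\sigma$. Choosing $K$ larger than the implicit constant places all directions in $\tau(q)$.

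For the case $|Q(q)|=O(\delta)$, I would run the same argument with $\sigma$ replaced by $\sqrt\delta$. Now $Q(\tilde q)=O(\delta)$, so $1-\cos(\theta_i-\theta_j)\lesssim\delta$. This gives an angular diameter of $O(\sqrt\delta)$, and the direction error $O(\delta)$ of $q'$ is again negligible.

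The only delicate point is the first step. It rests on $Q(q)$ being an upper bound $O(\sigma^2)$ that genuinely survives the $O(\delta)$ perturbation, which is exactly where $\sigma\gg\sqrt\delta$, or the $O(\delta)$ bound in the near-zero case, is used. The rest is elementary geometry of positive combinations of nearby unit vectors.
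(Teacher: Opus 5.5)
Your proposal is correct and follows essentially the paper's route. You center the arc at the horizontal direction of $q$, control the angular spread through $Q(p_1+p_2+p_3)=Q(q)+O(\delta)\lesssim\sigma^2$, pass from the direction of the horizontal part of $p_1+p_2+p_3$ to that of $q$ at cost $O(\delta)$, and dispose of $\sigma\asymp1$ trivially. The only cosmetic difference is that you use the pairwise form \eqref{nullidentity} plus a convexity step, whereas the paper uses the equivalent weighted-variance identity $\sum_i r_i|e(\theta_i)-x/R|^2=Q(S)/R$ to compare each $e(\theta_i)$ with the barycenter direction $x/R$ directly.
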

	\begin{proof}
		We show the argument in the case $\sqrt\delta\ll\sigma\lesssim1$. The other case follows by simple modifications. 
		Write $p_i=r_i(e(\theta_i),1)$, $R=\sum_i r_i\asymp1$, and
		$x=\sum_i r_i e(\theta_i)$. Thus $S:=p_1+p_2+p_3=(x,R)$.
		Since $S=q+O(\delta)$ and $\delta\lesssim\sigma^2$,
		$\,Q(S)=Q(q)+O(\delta)\lesssim\sigma^2$. The identity
		\begin{equation}\label{qsectoridentity}
			\sum_{i=1}^3 r_i\left|e(\theta_i)-\frac{x}{R}\right|^2
			=R-\frac{|x|^2}{R}=\frac{Q(S)}{R}
		\end{equation}
		and $r_i\ge1/2$ imply
		$|e(\theta_i)-x/R|\lesssim\sigma$ for each $i$.
		If $\sigma$ is sufficiently small, then
		$|x|^2=R^2-Q(S)$ gives $|x|\asymp1$, and hence
		\[
		\left|e(\theta_i)-\frac{x}{|x|}\right|\lesssim\sigma.
		\]
		Writing $v=(q_1,q_2)$, we also have $|v-x|\lesssim\delta$,
		so $|v|\asymp1$ and
		\begin{equation}\label{qsectordirection}
			\operatorname{dist}_{\mathbb S^1}
			\bigl(\theta_i,\arg v\bigr)\lesssim\sigma,
			\qquad i=1,2,3.
		\end{equation}
		It follows that there is $L\lesssim \sigma$ so that each contributing triple $\theta_1,\theta_2,\theta_3$ lies inside an $L$-arc. Cover the circle with a boundedly overlapping collection $T_\sigma$ of $3L/2$-arcs, so that each $L$-arc lies inside some $\tau\in\mathcal T_\sigma$.
		
	\end{proof}
	
	\subsection{The nearly zero contribution}
	For $\tau\in\mathcal T_\sigma$ from Lemma~\ref{lem:qsector},
	define
	\[
	\C_\tau=\{r(e(\theta),1):1/2\le r\le1,\ \theta\in\tau\},
	\qquad P_\tau^{\mathrm{cone}}=P\cap\C_\tau,\qquad N_\tau=\#P_\tau^{\mathrm{cone}}.
	\]
	Thus $P_\tau^{\mathrm{cone}}$ consists of the original cone points whose angles belong to $\tau$. 
	\begin{lemma}\label{lem:sector}
		For $\sqrt\delta\lesssim\sigma\lesssim1$,
		\begin{equation}\label{sectormass}
			\max_{\tau\in\mathcal T_\sigma}N_\tau\lesssim
			\delta^{-d}\sigma^{d-1},\qquad
			\sum_{\tau\in\mathcal T_\sigma}N_\tau\lesssim N.
		\end{equation}
	\end{lemma}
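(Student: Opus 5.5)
The second bound in \eqref{sectormass} comes straight from the bounded overlap of $\mathcal T_\sigma$. Every point of $P$ has a single direction $\theta$. By Lemma~\ref{lem:qsector} the arcs of $\mathcal T_\sigma$ overlap boundedly, so $\theta$ lies in $O(1)$ arcs. Summing $N_\tau=\#(P\cap\C_\tau)$ over $\tau$ therefore counts each point $O(1)$ times, which gives $\sum_\tau N_\tau\lesssim N$. If $\mathcal T_{\sqrt\delta}$ is also meant for $\sigma\asymp\sqrt\delta$, the same reasoning applies to it.

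For the first bound I will cover the sector $\C_\tau$ by balls and use the KT condition \eqref{kt} on each ball. Take $\tau$ of length $\asymp\sigma$. Then $\C_\tau$ is the image of the rectangle $[1/2,1]\times\tau$ under $(r,\theta)\mapsto r(e(\theta),1)$, a map that is Lipschitz with constant $O(1)$. This sector is a curved strip of length $\asymp1$ along the generators and width $\asymp\sigma$ in the angular direction, so it can be covered by $O(\sigma^{-1})$ balls of radius $\rho\asymp\sigma$. Concretely, split $[1/2,1]$ into $O(\sigma^{-1})$ intervals of length $\sigma$; each piece $[r_0,r_0+\sigma]\times\tau$ maps into a set of diameter $O(\sigma)$, which sits inside a ball $B(x,C'\sigma)$. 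Since $\sigma\gtrsim\sqrt\delta\ge\delta$ and $\sigma\lesssim1$, the radius $C'\sigma$ lies in the admissible range $\delta\le\rho\le2$, after shrinking the implicit constant if needed, or by splitting large balls into $O(1)$ smaller ones. Condition \eqref{kt} then gives at most $C(C'\sigma/\delta)^d$ points of $P$ in each ball. Summing over the $O(\sigma^{-1})$ balls,
\[
N_\tau\lesssim \sigma^{-1}(\sigma/\delta)^d=\delta^{-d}\sigma^{d-1},
\]
uniformly in $\tau$. This is the first bound in \eqref{sectormass}.

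There is no real obstacle. The only point needing care is that the covering balls are $O(\sigma)$-sized in every direction, so the KT condition applies at scale $\rho\asymp\sigma$, and that this scale stays within $[\delta,2]$. Both follow from the Lipschitz bound on the parametrization and from $\sqrt\delta\lesssim\sigma\lesssim1$.
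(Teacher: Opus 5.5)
Your proof is correct and follows the same route as the paper: you cover the $\sigma$-sector by $O(\sigma^{-1})$ balls of radius $O(\sigma)$, apply the KT bound \eqref{kt} in each ball, and get $\sum_\tau N_\tau\lesssim N$ from the bounded overlap of the arcs. Your extra checks, the Lipschitz parametrization and that the radius $\asymp\sigma$ lies in $[\delta,2]$, spell out details the paper leaves implicit.
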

	\begin{proof}
		An angular sector of aperture $O(\sigma)$ on the truncated cone is covered by $O(\sigma^{-1})$ balls of radius $O(\sigma)$. Apply \eqref{kt} in each ball. The second assertion follows from bounded overlap.
	\end{proof}

	\begin{lemma}[Near-zero sums]\label{lem:nearnull}
		Triples whose sum cube has $|Q(q)|=O(\delta)$ contribute $O(N\delta^{-2d-2})$ to $\E_{3,\delta}(P)$.
	\end{lemma}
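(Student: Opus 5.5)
By Lemma \ref{lem:qsector}, when $|Q(q)|=O(\delta)$ every triple with sum in the cube $q$ has all three directions in a single arc $\tau(q)\in\mathcal T_{\sqrt\delta}$. The arcs have length $\asymp\sqrt\delta$ and overlap boundedly. So the near-zero contribution is at most
\[
\sum_{\tau\in\mathcal T_{\sqrt\delta}}\sum_{q:\tau(q)=\tau} n(q)^2 .
\]
Moreover, each triple counted in such an $n(q)$ lies in $(P_\tau^{\rm cone})^3$. The plan is to bound the inner sum by the total $\delta$-energy of the plate $P_\tau^{\rm cone}$. We then estimate that energy crudely, using the fact that the plate is essentially planar at scale $\delta$.

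\textbf{Step 1 (per-plate energy).} For a fixed $\tau$ we have $\sum_{q:\tau(q)=\tau}n(q)^2\lesssim \#\{(p_1,\dots,p_6)\in(P_\tau^{\rm cone})^6: |p_1+p_2+p_3-p_4-p_5-p_6|\lesssim\delta\}$. We bound this count by fixing $p_1,\dots,p_5$ and counting the admissible $p_6$. The point $p_6$ is confined to a ball of radius $O(\delta)$, and by $\delta$-separation there are $O(1)$ choices. This gives the bound $N_\tau^5$.

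\textbf{Step 2 (summation).} By Lemma \ref{lem:sector} with $\sigma=\sqrt\delta$ we have $N_\tau\lesssim\delta^{-d}\delta^{(d-1)/2}=\delta^{-(d+1)/2}$ and $\sum_\tau N_\tau\lesssim N$. Hence
\[
\sum_\tau N_\tau^5\le \Big(\max_\tau N_\tau\Big)^4\sum_\tau N_\tau\lesssim N\delta^{-2(d+1)}=N\delta^{-2d-2},
\]
which is the claimed bound. This matches the flat example, where each plate carries $n\asymp\delta^{-(d+1)/2}$ points.

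\textbf{Main obstacle.} The only delicate point is Step 1's reduction: all triples with sums in near-zero cubes must be localized to the arcs of $\mathcal T_{\sqrt\delta}$ uniformly, with no loss. This is the $|Q(q)|=O(\delta)$ case of Lemma \ref{lem:qsector}.

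To check that case, I would rerun identity \eqref{qsectoridentity}. It gives $|e(\theta_i)-x/R|\lesssim\sqrt{\delta}$, using $Q(S)=Q(q)+O(\delta)=O(\delta)$ and $r_i\ge 1/2$. The arc can then be chosen depending only on $q$, via $\arg v$.

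A second detail is that a $6$-tuple counted in $\E_{3,\delta}$ contributes to $n(q)n(q')$ for neighbouring cubes $q,q'$. These cubes may be assigned to different arcs. This causes no problem, because \eqref{energycubes} already absorbs this by bounded overlap: cubes of side $c\delta$ are used, and two triples in the same cube are both localized to $\tau(q)$.
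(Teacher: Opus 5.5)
Your proposal is correct and is essentially the paper's proof. Lemma~\ref{lem:qsector} localizes all triples with a near-zero sum cube to one arc $\tau(q)\in\mathcal T_{\sqrt\delta}$; each arc then contributes at most $N_\tau^5$, since five points fix the sixth up to $O(1)$ choices by $\delta$-separation; and summing with Lemma~\ref{lem:sector} at $\sigma=\sqrt\delta$ gives the bound. Your check of the $|Q(q)|=O(\delta)$ case of Lemma~\ref{lem:qsector} via \eqref{qsectoridentity} fills in the ``simple modifications'' that the paper leaves implicit.
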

	\begin{proof}
		Apply Lemma~\ref{lem:qsector}.
		For each output cube $q$ with $|Q(q)|=O(\delta)$, the lemma places
		all triples counted by $n(q)$ in the same sector $\tau(q)$.
		Within this sector, five points determine the sixth up to $O(1)$
		possibilities, since $P$ is $\delta$-separated. By bounded overlap
		the contribution is at most
		\[
		\sum_{\tau\in\mathcal T_{\sqrt\delta}}N_\tau^5
		\le (\max_\tau N_\tau)^4\sum_\tau N_\tau
		\lesssim N(\delta^{-d}(\sqrt\delta)^{d-1})^4
		=N\delta^{-2d-2}.
		\]
	\end{proof}
	We next deal with the far from zero-sums. 
	\subsection{The normalization at angular scale $\sigma$ for fixed $q$}
	Consider a sum cube centered at $q$ such that $Q(q)\asymp\sigma^2\gg\delta$.
	By Lemma~\ref{lem:qsector}, all triples contributing to $n(q)$
	use points in the single sector $\tau=\tau_\sigma(q)$.
	Let $q=(q_1,q_2,t)$. Rotate the horizontal coordinates so that
	$(q_1,q_2)$ becomes $(s,0)$, where $s=(q_1^2+q_2^2)^{1/2}$.
	This rotation preserves the value of $Q(q)$. It also preserves the angular diameter, in fact
	all angular parameters change by the same constant. Consequently,
	the comparison $Q(q)\asymp\sigma^2$, obtained before rotation,
	remains valid afterward. Since $q=(s,0,t)$ and $t>0$, we have
	\[
	t^2-s^2=Q(q)\asymp\sigma^2>0,
	\qquad h:=(t^2-s^2)^{1/2}=\sqrt{Q(q)}\asymp\sigma.
	\]
	We now use $A_q$ from \eqref{lorentz}. If
	\begin{equation}
		\label{wdk jhuifyerg968y09=568}
		p_1+p_2+p_3=q+O(\delta)
	\end{equation}
	then
	\begin{equation}
		\label{ch yf gtreg9tih90u8u=}
		A_qp_1+A_qp_2+A_qp_3=(0,0,1)+O(\delta/\sigma^2).
	\end{equation}
	Indeed, the symmetric matrix representing $A_q$ has eigenvalues
	$(t+s)/h^2$, $(t-s)/h^2$, and $1/h$; the first two have
	eigenvectors $(1,0,-1)$ and $(1,0,1)$. Hence its Euclidean
	operator norm is
	\[
	\|A_q\|_{\mathrm{op}}=\sup_{|v|=1}|A_qv|
	=\frac{t+s}{h^2}\asymp\sigma^{-2}.
	\]
	Thus an additive error $O(\delta)$ becomes $O(\delta/\sigma^2)$ under $A_q$.

	As explained in Remark \ref{hdufherugy8t9ug90tru}, we may assume that the radii of at least two of the three points $A_qp_1,A_qp_2,A_qp_3$ satisfy, say (assuming the error term $O(\delta/\sigma^2)$ is small enough) 
	$$1/48\le r_i',r_j'\le1/2-1/48.$$
	Any three points on the circle belong to an arc of length at most \(4\pi/3\), as follows by considering the three consecutive angular gaps. Fix a collection of 24 open arcs \(\gamma\) of length \(3\pi/2\), with equally spaced starting points. Every arc of length \(4\pi/3\) is contained in at least one of these \(\gamma\). It follows that any triple $A_q(p_1),A_q(p_2),A_q(p_3)$ will have their polar angles $\theta_i'$ inside some $\gamma$. The only relevance is that the length of $\gamma$ is $<2\pi$, as used below.
	
	Let $\tau$ be the arc associated with $q$ via Lemma \ref{lem:qsector}. We may and will assume it is centered at the origin. Indeed, after rotating the horizontal direction of $q$, the original angles of all points in $P_\tau^{\mathrm{cone}}$
	satisfy $|\theta|\lesssim\sigma$. This is the angular hypothesis used
	in Lemma~\ref{lem:boost} below. We now bound $n(q)$ using this
	sector and the finite family of transformed-angle charts $\gamma$.
	
	\paragraph{The planar map for a fixed $\gamma$.}
	We aim to do planar incidence geometry, so we shift the discussion to the plane. 
	Recall our notation 
	$p=r(e(\theta),1)$,
	$A_qp=r'(e(\theta'),1)$.  We define the planar map
	\[
	T=T_q:(r,\theta)\longmapsto V(r',\theta')
	=\sqrt{\frac{2r'}{1-2r'}}\,e(\theta'/2).
	\]
	Here $V$ is the nonlinear map from Section~2. On the fixed chart
	we require $1/48\le r'\le 1/2-1/48$ and the \emph{transformed}
	angle $\theta'\in\gamma$. Define the planar points
	\[
	P_\tau=\{T(r,\theta):r(e(\theta),1)\in P_\tau^{\mathrm{cone}},
	\ 1/48\le r'\le 1/2-1/48,\ \theta'\in\gamma\}.
	\]
	The points of $P_\tau$ lie in $\mathbb R^2$ and have cardinality $O(N_\tau)$. They arise from the points $P^{cone}_\tau$ on the cone-sector via two transformations, whose combined action is encoded by the useful planar map $T$. They are first mapped via the linear map $A_q$, then via the nonlinear map $V$ (which has a linearizing effect, leading to lines). Since the domain of $V$ will need $\theta'$ in an interval shorter than $2\pi$, we restrict $P_\tau$ to an arc $\gamma$. This is a rather unimportant technicality.

	The next lemma is instrumental in determining the KT non-concentration of the points $P_\tau$. 
	\begin{lemma}[The map $T$]\label{lem:boost}
		Let $q=(s,0,t)$ and $h=(t^2-s^2)^{1/2}\asymp\sigma$.  The
		intermediate polar coordinates of $A_qp$ satisfy
		\begin{equation}\label{boostcoords}
			r'=\frac{r(t-s\cos\theta)}{h^2},\qquad
			\tan\frac{\theta'}2=\frac{t+s}{h}\tan\frac\theta2.
		\end{equation}
		Suppose $|\theta|\lesssim\sigma$ and
		$1/48\le r'\le1/2-1/48$.  For a Euclidean ball of radius $w$ in the
		\emph{target $V$-plane}, the preimage under $T$ is contained in a
		region of radial width $O(w)$ and angular width $O(\sigma w)$.
		All constants are uniform in $\sigma$.
	\end{lemma}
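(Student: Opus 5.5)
The plan is to verify the formulas \eqref{boostcoords} by direct computation, and then to bound the inverse Jacobian of $T$ on the relevant region.

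\textbf{Step 1: the formulas.} Apply \eqref{lorentz} to $p=r(\cos\theta,\sin\theta,1)$. This gives
\[
A_qp=h^{-2}\bigl(r(t\cos\theta-s),\,hr\sin\theta,\,r(t-s\cos\theta)\bigr).
\]
The third coordinate yields $r'=r(t-s\cos\theta)/h^2$. For the angle,
\[
\cos\theta'=\frac{t\cos\theta-s}{t-s\cos\theta},\qquad \sin\theta'=\frac{h\sin\theta}{t-s\cos\theta}.
\]
The half-angle formula $\tan(\theta'/2)=\sin\theta'/(1+\cos\theta')$ then gives
\[
\frac{h\sin\theta}{(t-s)(1+\cos\theta)}=\frac{h}{t-s}\tan\frac\theta2=\frac{t+s}{h}\tan\frac\theta2,
\]
using $h^2=(t-s)(t+s)$.

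\textbf{Step 2: the angular coordinate.} Write $\lambda=(t+s)/h\asymp\sigma^{-1}$, since $t+s\asymp1$ and $h\asymp\sigma$. Set $\phi=\theta'/2$, so $\tan\phi=\lambda\tan(\theta/2)$. In the regime $|\theta|\lesssim\sigma$ we have $\tan(\theta/2)\asymp\theta$, so $\tan\phi=O(1)$. Differentiating gives
\[
\frac{d\phi}{d\theta}=\frac{\lambda\sec^2(\theta/2)}{2(1+\lambda^2\tan^2(\theta/2))}\asymp\lambda\asymp\sigma^{-1},
\]
because the denominator is $1+\tan^2\phi\asymp1$. Hence $\theta\mapsto\phi$ is a monotone bijection with derivative comparable to $\sigma^{-1}$. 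Changes of size $w$ in $\phi$ therefore pull back to changes of size $O(\sigma w)$ in $\theta$.

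\textbf{Step 3: the radial coordinate.} Fix $\theta$. The map $r\mapsto r'$ is linear with slope $(t-s\cos\theta)/h^2$. Now
\[
t-s\cos\theta=(t-s)+s(1-\cos\theta)=\frac{h^2}{t+s}+O(\sigma^2),
\]
so the slope is $\asymp1$, being bounded above and below by constants. Next, $r'\mapsto\rho=\sqrt{2r'/(1-2r'})$ is smooth with derivative bounded below on $[1/48,1/2-1/48]$, and $\rho$ stays in a compact subset of $(0,\infty)$.

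\textbf{Step 4: the Euclidean ball.} In the target plane $T(r,\theta)=\rho\,e(\phi)$ with $\rho\asymp1$. A ball of radius $w$, with $w\lesssim1$ (larger $w$ are trivial), is therefore contained in a polar box $|\rho-\rho_0|\lesssim w$, $|\phi-\phi_0|\lesssim w$. The angular condition, by Step 2, confines $\theta$ to an interval of length $O(\sigma w)$.

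On that interval, $\rho$ is a function of $(r,\theta)$. The main subtlety is the coupling: $r'$ depends on $\theta$ too. This is handled by noting that $\partial r'/\partial\theta=rs\sin\theta/h^2=O(\sigma/\sigma^2)=O(\sigma^{-1})$. Over a $\theta$-range of length $O(\sigma w)$, this shifts $r'$ by only $O(w)$. Combining with the lower bound on $\partial\rho/\partial r\asymp1$ from Step 3, the condition $|\rho-\rho_0|\lesssim w$ confines $r$ to an interval of length $O(w)$.

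I expect this cross-term to be the only genuine obstacle. It is resolved precisely because $h^2\asymp\sigma^2$ and $|\theta|\lesssim\sigma$ balance. All constants depend only on the comparability constants and on the chart bounds $1/48$, so they are uniform in $\sigma$.
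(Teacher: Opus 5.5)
Your proposal is correct and follows essentially the same route as the paper: the explicit coordinates of $A_qp$ with the half-angle identity, the bound $d\theta'/d\theta\asymp\sigma^{-1}$ on $|\theta|\lesssim\sigma$, the comparison $t-s\cos\theta\asymp\sigma^2$, the bi-Lipschitz behaviour of $V$ on the chart, and the same balancing of an $O(\sigma^{-1})$ cross-derivative against an $O(\sigma w)$ angular range. The only difference is cosmetic: the paper differentiates the inverse relation $r=h^2r'/(t-s\cos\theta)$ in $(r',\theta')$, whereas you differentiate $r'$ forward in $\theta$ and split the difference with the mean value theorem.
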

	\begin{proof}
		
		The three coordinates of $A_qp$ are
		\[
		x'=\frac{r(t\cos\theta-s)}{h^2},\qquad
		y'=\frac{r\sin\theta}{h},\qquad
		r'=\frac{r(t-s\cos\theta)}{h^2}.
		\]
		Since $r'=\sqrt{x'^2+y'^2}$, the half-angle identity gives
		\[
		\tan\frac{\theta'}2=\frac{y'}{r'+x'}
		=\frac{h\sin\theta}{(t-s)(1+\cos\theta)}
		=\frac{t+s}{h}\tan\frac\theta2.
		\]
		Put $k=(t+s)/h\asymp\sigma^{-1}$.  For
		$|\theta|\lesssim\sigma$,
		\[
		\frac{d\theta'}{d\theta}
		=\frac{k(1+\tan^2(\theta/2))}{1+k^2\tan^2(\theta/2)}
		\asymp\sigma^{-1},\qquad
		\left|\frac{d\theta}{d\theta'}\right|\lesssim\sigma.
		\]
		Also $t-s\cos\theta=(t-s)+s(1-\cos\theta)\asymp\sigma^2$ and
		\[
		r=\frac{h^2r'}{t-s\cos\theta}.
		\]
		Differentiating shows $|\partial r/\partial r'|\lesssim1$ and
		$|\partial r/\partial\theta'|\lesssim1$: indeed,
		the $\theta$-derivative of the coefficient is
		$O(\sigma^{-1})$, which is multiplied by
		$d\theta/d\theta'=O(\sigma)$.
		
		It remains to take into account  $V$. On $1/48\le r'\le1/2-1/48$,
		\[
		V(r',\theta')=f(r')e(\theta'/2),\qquad
		f(r')=\sqrt{\frac{2r'}{1-2r'}}.
		\]
		Indeed,
		\[
		f'(r')=\frac{1}{\sqrt{2r'}(1-2r')^{3/2}},
		\]
		so both $f$ and $f'$ are bounded above and below by positive constants
		on this fixed radial interval. To see explicitly why $V$ has a
		uniformly bi-Lipschitz inverse on its image, take two points
		$u_j=V(r'_j,\theta'_j)=\rho_j e(\theta'_j/2)$, where
		$\rho_j=f(r'_j)$ and $\theta'_j\in\gamma$. Then
		\[
		\begin{aligned}
			|u_1-u_2|^2
			&= (\rho_1-\rho_2)^2
			+\rho_1\rho_2|e(\theta'_1/2)-e(\theta'_2/2)|^2\\
			&= (\rho_1-\rho_2)^2
			+4\rho_1\rho_2\sin^2\!\left(\frac{\theta'_1-\theta'_2}{4}\right)\\
			&\asymp |r'_1-r'_2|^2+|\theta'_1-\theta'_2|^2.
		\end{aligned}
		\]
		For the last comparison, the mean value theorem controls
		$|\rho_1-\rho_2|$ by $|r'_1-r'_2|$ in both directions,
		while $|\theta'_1-\theta'_2|\le 3\pi/2$ ensures that
		$|\sin((\theta'_1-\theta'_2)/4)|\asymp|\theta'_1-\theta'_2|$.
		The constants are independent of $q$ and $\sigma$.
		Equivalently, the inverse can be written as
		\[
		r'=\frac{|u|^2}{2(1+|u|^2)},\qquad
		\theta'=2\operatorname{Arg}_{\gamma/2}(u),
		\]
		where $\operatorname{Arg}_{\gamma/2}$ denotes the branch of the
		argument taking values in the lifted interval $\gamma/2$.
		This explains the role of restricting the transformed angle to an
		arc $\gamma$ of length less than $2\pi$: it allows a consistent
		single-valued angular branch (indeed,
		$V(r',\theta'+2\pi)=-V(r',\theta')$) and gives the uniform
		chord--angle comparison above. The finite collection of such charts
		ensures that no contributing triple is lost.
		Consequently, the inverse image of a $w$-ball under $V$ has
		$r'$-width and $\theta'$-width $O(w)$. The preceding derivative
		bounds for the inverse Lorentz coordinate map then imply radial
		width $O(w)$ and angular width $O(\sigma w)$ in the original
		coordinates.  
	\end{proof}
	
	\paragraph{The transformed incidence scale.}
	Recall that
	\[
	\|A_q\|_{\mathrm{op}}=\sup_{|v|=1}|A_qv|
	=\frac{t+s}{h^2}\asymp\sigma^{-2}.
	\]
	Thus an additive error $O(\delta)$ becomes $O(\delta/\sigma^2)$.
	More precisely, $Q(A_qv)=h^{-2}Q(v)$ and $A_qq=(0,0,1)$, so
	\[
	Q\bigl((0,0,1)-A_qp_i-A_qp_j\bigr)
	=h^{-2}Q(q-p_i-p_j)=O(\delta/\sigma^2).
	\]
	For each $a\in P_\tau$ define
	\[
	\ell_a^\pm=\{v\in\mathbb R^2:a\cdot v=\pm1\}\]
	and let
	\[\mathcal L_\tau^\pm=\{\ell_a^\pm:a\in P_\tau\}.
	\]
	The identity \eqref{linecondition} relating
	$Q((0,0,1)-A_qp_i-A_qp_j)$ to
	$V(A_qp_i)\cdot V(A_qp_j)$ shows that a compatible selected pair
	gives a point of $P_\tau$ within $O(\delta/\sigma^2)$ of a line
	in $\mathcal L_\tau^+$ or $\mathcal L_\tau^-$.  Set
	\[
	\eta:=\delta/\sigma^2.
	\]
	
	\begin{lemma}[Rescaled Katz--Tao constant]\label{lem:rescaleKT}Let $\tau\in \mathcal T_\sigma$, $\sigma\gg \sqrt{\delta}$.
		The planar point collection $P_\tau$
		and the line collections $\mathcal L_\tau^\pm$ are $(\eta,d,K_\sigma)$-KT sets, where
		\begin{equation}\label{Ksigma}
			K_\sigma\lesssim\sigma^{-d-1}.
		\end{equation}
	\end{lemma}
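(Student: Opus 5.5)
The plan is to transfer the Katz--Tao condition \eqref{kt} for the original cone points through the planar map $T=T_q$, using the distortion bounds of Lemma~\ref{lem:boost}, and then to handle the lines by duality. Fix $\tau\in\mathcal T_\sigma$, the chart $\gamma$, and a ball $B(x,\rho)$ in the target $V$-plane with $\eta\le\rho\le 2$. Since $T$ is injective on the chart (Section~2: $V$ is injective, and $A_q$ is linear and invertible), $\#(P_\tau\cap B(x,\rho))$ equals the number of points of $P_\tau^{\mathrm{cone}}$ whose images under $T$ lie in $B(x,\rho)$. By Lemma~\ref{lem:boost}, applied with the angular hypothesis $|\theta|\lesssim\sigma$ available after the rotation centering $\tau$, these preimages lie in a region of the cone of radial width $O(\rho)$ and angular width $O(\sigma\rho)$. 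Because the radii are $\asymp 1$, this region is a curved rectangle of dimensions $O(\rho)\times O(\sigma\rho)$ on $\C$.

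Next I cover this rectangle by $O(\sigma^{-1})$ Euclidean balls of radius $\sigma\rho$. Note that $\sigma\rho\ge\sigma\eta=\delta/\sigma\ge\delta$, and $\sigma\rho\lesssim 2$, so \eqref{kt} applies to each ball at scale $\sigma\rho$. If $\rho$ is comparable to $1$, the radial width is capped at $O(1)$, which only helps. Summing over the balls gives
\[
\#(P_\tau\cap B(x,\rho))\lesssim \sigma^{-1}\Big(\frac{\sigma\rho}{\delta}\Big)^d
=\sigma^{-1}\sigma^{d}\Big(\frac{\rho}{\eta}\Big)^d\sigma^{-2d}
=\sigma^{-d-1}\Big(\frac{\rho}{\eta}\Big)^d,
\]
where I used $\delta=\eta\sigma^2$. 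This is the bound \eqref{Ksigma} for the points.

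For the lines $\mathcal L_\tau^\pm$, I identify $\ell_a^\pm$ with its dual parameter $a\in P_\tau$. The natural metric on lines, comparing unit normals and distances to the origin, is comparable to $|a-a'|$ on the image of $V$. This holds because $|a|=f(r')$ is bounded above and below there, so the map $a\mapsto(a/|a|,\pm1/|a|)$ is bi-Lipschitz with uniform constants. Hence a $\rho$-ball of lines corresponds to an $O(\rho)$-ball of parameters $a$, and the point estimate above transfers directly, with the same constant $K_\sigma\lesssim\sigma^{-d-1}$.

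I expect the main obstacle to be the first step: confirming that Lemma~\ref{lem:boost} really yields a region whose angular width is $O(\sigma\rho)$ rather than $O(\rho)$, and that this width stays uniform over the whole chart. The point is that the angular compression $d\theta/d\theta'\asymp\sigma$ must be used in full. With that in hand, the covering count is routine.
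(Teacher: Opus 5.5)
Your proposal is correct and follows the paper's proof essentially step for step. You pull a $w$-ball back through $T$ via Lemma~\ref{lem:boost} to a region of size $O(w)\times O(\sigma w)$, cover it by $O(\sigma^{-1})$ balls of radius $\sigma w\ge\delta/\sigma\ge\delta$, and apply \eqref{kt} to get $\sigma^{-1}(\sigma w/\delta)^d=\sigma^{-d-1}(w/\eta)^d$; you then transfer the bound to $\mathcal L_\tau^\pm$ through the duality $a\mapsto\ell_a^\pm$, which is bi-Lipschitz on $\{|a|\asymp1\}$. Your extra remarks on the injectivity of $T$, the capped radial width when $w\sim1$, and the normal/distance parametrization of lines only spell out details the paper leaves implicit.
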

	\begin{proof}
		By Lemma~\ref{lem:boost}, the preimage under $T$ of a
		$w$-ball with $w\ge\eta$ has  radial
		width $O(w)$ and angular width $O(\sigma w)$.  It can be
		covered by $O(\sigma^{-1})$   balls of radius
		$O(\sigma w)$.  Since $\sigma w\ge\sigma\eta
		=\delta/\sigma\ge\delta$, the KT hypothesis for the original points on the cone (which translates into a similar condition in the $(r,\theta)$ variables) gives
		\[
		\#\{a\in P_\tau:a\in B(v,w)\}
		\lesssim \sigma^{-1}(\sigma w/\delta)^d
		=\sigma^{-d-1}(w/\eta)^d.
		\]

		The map $u\mapsto\{v:u\cdot v=\pm1\}$ is bi-Lipschitz  whenever $|u|$ is bounded above
		and away from zero.  Both requirements hold,
		since $u=V(r',\theta')$ and $1/48\le r'\le1/2-1/48$.
		Therefore the same KT bound transfers from $P_\tau$ to
		$\mathcal L_\tau^\pm$.
	\end{proof}
	
	\subsection{The incidence bound for one sum cube}
	We introduce the replacement for the Szemer\'edi--Trotter theorem. The Fu-Ren result takes advantage of the key features of our point-tube incidences derived in Lemma \ref{lem:rescaleKT}: non-concentration of both points and tubes. 
	\begin{theorem}[Fu--Ren \cite{FR}, Theorem 1.5]\label{thm:FR}
		Let $1<d\le2$ and $0<\eta<1$. Let $\mathcal P$ be a collection of
		points and $\mathcal L$ a collection of $\eta$-neighborhoods of
		unit line segments in the plane. Assume that the
		point collection and the line-parameter collection satisfy the
		$(\eta,d)$ Katz--Tao bounds with constants $K_P$ and $K_L$,
		respectively. Then, for every
		$\varepsilon>0$,
		\begin{equation}\label{FR}
			I(\mathcal P,\mathcal L)\lesssim_\varepsilon
			\eta^{-c-\varepsilon}(K_PK_L)^c
			(\#\mathcal P)^{1-c}(\#\mathcal L)^{1-c},
			\qquad c=\frac1{\max\{2d-1,2\}}.
		\end{equation}
		Here $I$ counts intersecting point--tube pairs.
	\end{theorem}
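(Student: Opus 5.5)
The plan is to reduce to Katz--Tao constants $K_P=K_L=1$ and then prove the unit-constant incidence bound by a multiscale (high--low / induction on scales) argument. This follows the strategy of Fu and Ren and of the Guth--Solomon--Wang tube incidence estimates. Within this paper we use Theorem \ref{thm:FR} only as a black box, so what follows is a sketch of how one would establish it.

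\textbf{Step 1: removing the Katz--Tao constants.} I would partition $\mathcal P$ into $\lesssim K_P\log(1/\eta)$ subfamilies $\mathcal P_i$, each an $(\eta,d,O(1))$-KT set, and similarly split $\mathcal L$ into $\lesssim K_L\log(1/\eta)$ subfamilies $\mathcal L_j$. This can be done by a random colouring: each ball $B(x,\rho)$ contains at most $K_P(\rho/\eta)^d$ points, so a random assignment into $K_P$ colours leaves, with high probability, at most $O(\log(1/\eta))(\rho/\eta)^d$ points of each colour in every dyadic ball. A union bound over $\eta^{-O(1)}$ balls makes this hold for all balls simultaneously. Now suppose the unit-constant estimate $I(\mathcal P_i,\mathcal L_j)\lesssim_\varepsilon\eta^{-c-\varepsilon}(\#\mathcal P_i\,\#\mathcal L_j)^{1-c}$ holds. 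Summing over $i,j$ and applying H\"older in the form $\sum_i(\#\mathcal P_i)^{1-c}\le (K_P)^{c}(\#\mathcal P)^{1-c}$ (and likewise for $\mathcal L$) gives exactly \eqref{FR}, up to logarithmic losses absorbed into $\eta^{-\varepsilon}$. The exponent $c$ on $K_PK_L$ is precisely what this homogeneity produces.

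\textbf{Step 2: the unit-constant case.} Here $\#\mathcal P,\#\mathcal L\lesssim\eta^{-d}$. I would induct on scales between $\eta$ and $1$.

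\begin{itemize}
\item At a coarse scale $\rho$, group points into $\rho$-balls and tubes into $\rho$-tubes.
\item Bound the coarse incidences using the Szemer\'edi--Trotter-type bound for well-spaced tubes (Guth--Solomon--Wang).
\item Rescale each $\rho$-tube to a unit square, where the fine incidences become a problem of the same type at scale $\eta/\rho$. The KT condition is inherited with a constant governed by $(\rho/\eta)^d$ versus the $\rho$-tube content.
\item Close the induction using Step 1 again.
\end{itemize}

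Alternatively, one could use the high--low method. Write $1_{\mathcal L}$ as a low-frequency part at frequency $\lesssim K$ plus a high part. The high part is controlled by $L^2$ orthogonality, $\sum_T|\widehat{1_T}|^2$ restricted to $|\xi|\sim K\eta^{-1}$, which exploits the KT spacing of directions. The low part reduces to incidences at scale $K\eta$, handled by induction.

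\textbf{Step 3: the two exponent regimes.} These appear because the threshold $c=1/\max\{2d-1,2\}$ switches at $d=3/2$. For $d\le3/2$ the target is $\eta^{-1/2}(\#\mathcal P\#\mathcal L)^{1/2}$, which is essentially a Cauchy--Schwarz plus two-ends/bilinear bound. For $d>3/2$ one needs the sharper $c=1/(2d-1)$, and this is where the genuine incidence input enters.

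The main obstacle is the regime $d>3/2$ of Step 2. There the dependence on the spacing must be sharp at every intermediate scale, and the induction requires controlling how the KT constants of the rescaled subproblems redistribute. I would first try to reproduce the Fu--Ren argument: handle the case where both families are ``Frostman at all scales'' via the Guth--Solomon--Wang well-spaced estimate, and reduce to it by a multiscale pigeonholing of branching numbers.
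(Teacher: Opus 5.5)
The paper does not prove Theorem~\ref{thm:FR}; it quotes it from Fu--Ren and uses it as a black box in Lemma~\ref{lem:fibre}. Judged as a proof, your proposal has a genuine gap, because only Step~1 is an actual argument.

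Step~1 is a sound, standard reduction. Random colouring into $m_P\approx K_P$ and $m_L\approx K_L$ classes, followed by $\sum_i(\#\mathcal P_i)^{1-c}\le m_P^{c}(\#\mathcal P)^{1-c}$ (and likewise for $\mathcal L$), does produce the factor $(K_PK_L)^c$. Two caveats:
\begin{itemize}
\item The classes are only $(\eta,d,O(\log(1/\eta)))$-KT, not $(\eta,d,O(1))$-KT. An $\eta$-ball may carry $K_P$ points, and the union bound over $\eta^{-O(1)}$ balls forces a logarithmic threshold. Re-colouring never lowers this, so your base case must be proved for polylogarithmic constants.
\item More seriously, ``close the induction using Step~1 again'' is not an argument. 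After rescaling a $\rho$-tube, the point family has KT constant about $\rho^{-1}$ (compare the loss $K_\sigma\lesssim\sigma^{-d-1}$ in Lemma~\ref{lem:rescaleKT}). Whether the induction closes with the exact exponent $c$ despite the resulting $K^{c}$ factors is precisely what would have to be shown.
\end{itemize}

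Steps~2--3 carry the entire content of the theorem, but they are a list of techniques, and the one concrete claim in them fails.
\begin{itemize}
\item For $d\le 3/2$ the target is $\eta^{-1/2}(\#\mathcal P\,\#\mathcal L)^{1/2}$. The direct bound $I^2\le\#\mathcal P\sum_{T,T'}\#(\mathcal P\cap T\cap T')$ gives only $I\lesssim\#\mathcal P+\eta^{-d/2}(\#\mathcal P\,\#\mathcal L)^{1/2}$. For a fixed $T$, the tubes $T'$ meeting it at angle $\sim\theta$ number up to $(\theta/\eta)^d$, and each shares at most $O(\theta^{-1})$ points with it. So the off-diagonal sum is $\sum_\theta\theta^{d-1}\eta^{-d}\sim\eta^{-d}$ per tube, dominated by transversal pairs. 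This misses the target by a factor $\eta^{-(d-1)/2}$ for every $d>1$, and invoking ``two-ends/bilinear'' does not say what recovers it.
\item For $d>3/2$ you explicitly defer the sharp exponent $c=1/(2d-1)$ to reproducing Fu--Ren. Moreover, the Guth--Solomon--Wang estimate you propose for the coarse incidences needs a well-spacing hypothesis, which the coarse $\rho$-tubes of a general KT family need not satisfy.
\end{itemize}

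So the proposal establishes the routine dependence on $K_P,K_L$, but not the incidence estimate itself.
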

	
	We may now establish the upper bound for $n(q)$.
	
	\begin{lemma}[Fixed-sum fibre]\label{lem:fibre}
		For a sum cube $q$ with $Q(q)\asymp\sigma^2\gg\delta$,
		let $\tau=\tau(q)$ be its assigned sector from
		Lemma~\ref{lem:qsector}. Then, letting $N_\tau=\#P_\tau^{\mathrm{cone}}$
		\begin{equation}\label{fibre}
			n(q)\lesssim_\varepsilon
			\delta^{-c-\varepsilon}\sigma^{-2cd}N_\tau^{2-2c},
			\qquad c=\frac1{\max\{2d-1,2\}}.
		\end{equation}
	\end{lemma}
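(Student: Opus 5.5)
We fix a sum cube $q$ with $Q(q)\asymp\sigma^2\gg\delta$ and run the normalization of the previous subsection. By Lemma~\ref{lem:qsector}, every triple counted by $n(q)$ uses points of $P_\tau^{\mathrm{cone}}$, $\tau=\tau(q)$. After rotating and applying $A_q$, the triple sum becomes $(0,0,1)+O(\eta)$ with $\eta=\delta/\sigma^2$. By Remark~\ref{hdufherugy8t9ug90tru}, at least two transformed radii lie in $[1/48,1/2-1/48]$. We split $n(q)$ into $O(1)$ pieces, according to which pair $(i,j)$ is selected, which chart $\gamma$ contains the transformed angles, and which sign $\pm$ occurs. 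It then suffices to bound each piece.

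Within a piece, the selected pair $(p_i,p_j)$ gives planar points $a=T(p_i)$ and $v=T(p_j)$ in $P_\tau$. We have $Q((0,0,1)-A_qp_i-A_qp_j)=O(\eta)$. Using the factorization from Section~2 with prefactor $(1-2r_i')(1-2r_j')\asymp1$, this gives $|a\cdot v\mp1|\lesssim\eta$. Since $|a|\asymp1$, the point $v$ lies in the $O(\eta)$-neighbourhood of $\ell_a^\pm$, intersected with a bounded region, which can be covered by $O(1)$ unit segments.

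The third point needs care. Given $(p_i,p_j)$, the point $p_k$ must lie within $O(\delta)$ of $q-p_i-p_j$. Since $P$ is $\delta$-separated, there are $O(1)$ choices of $p_k$. Hence $n(q)\lesssim I(P_\tau,\mathcal L_\tau^\pm)$, summed over the $O(1)$ pieces, where $I$ counts incidences with $\eta$-tubes of the lines (with a harmless enlargement of the constant in the tube width). Distinct points $p$ give distinct points $T(p)$, since $T$ is injective on the chart. Moreover, $P_\tau$ is $\eta$-separated up to bounded multiplicity: by Lemma~\ref{lem:rescaleKT} with $w=\eta$, an $\eta$-ball contains $O(\sigma^{-d-1})$ points. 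This multiplicity is already absorbed into the KT constant, so we need not separate further.

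Now we apply Theorem~\ref{thm:FR} with $\mathcal P=P_\tau$ and $\mathcal L=\mathcal L_\tau^\pm$, using $\#\mathcal P,\#\mathcal L\lesssim N_\tau$ and $K_P,K_L\lesssim\sigma^{-d-1}$ from Lemma~\ref{lem:rescaleKT}. This gives
\[
n(q)\lesssim_\varepsilon \eta^{-c-\varepsilon}\sigma^{-2c(d+1)}N_\tau^{2-2c}
=\delta^{-c-\varepsilon}\sigma^{2c+2\varepsilon}\sigma^{-2cd-2c}N_\tau^{2-2c}
\le \delta^{-c-\varepsilon}\sigma^{-2cd}N_\tau^{2-2c},
\]
since $\sigma\lesssim1$. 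This is \eqref{fibre}.

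The main obstacle is checking that the Fu--Ren hypotheses hold as stated. The tubes must be $\eta$-neighbourhoods of unit segments, while our lines $a\cdot v=\pm1$ are only relevant inside a bounded ball of the $V$-plane. We handle this by cutting each line into $O(1)$ unit segments. The line-parameter KT condition must also be checked in the parametrization used by Fu--Ren. Lemma~\ref{lem:rescaleKT} provides this via the bi-Lipschitz correspondence $u\mapsto\ell_u$, and a change of parametrization to the standard slope--intercept coordinates, which is bi-Lipschitz on a compact set away from vertical lines after an $O(1)$ rotation, only affects constants. Finally, the tube width $C\eta$ may be replaced by $\eta$ at the cost of constants, by rescaling $\eta$.
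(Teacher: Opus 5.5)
Your proposal is correct and follows the paper's proof. The steps match: restrict to $P_\tau^{\mathrm{cone}}$, normalize by $A_q$, select a pair with transformed radii in $[1/48,1/2-1/48]$, turn $Q((0,0,1)-A_qp_i-A_qp_j)=O(\eta)$ into an $O(\eta)$-thickened incidence via $V$, apply Theorem~\ref{thm:FR} with $K_P,K_L\lesssim\sigma^{-d-1}$ and cardinalities at most $N_\tau$, and recover the third point from $\delta$-separation in the original coordinates. Your extra bookkeeping (charts, signs, unit segments, line parametrization) only supplies details the paper leaves implicit. One small correction: the $\eta$-ball multiplicity $O(\sigma^{-d-1})$ is not bounded, but as you say it is harmless, because Theorem~\ref{thm:FR} as stated requires only the Katz--Tao condition, not separation.
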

	\begin{proof}
		By Lemma~\ref{lem:qsector}, all triples counted by $n(q)$ lie
		in $P_\tau^{\mathrm{cone}}$, and the angular hypothesis of
		Lemma~\ref{lem:boost} holds on this sector after rotation.
		Normalize $q$ by $A_q$. Every normalized triple satisfies $r_1'+r_2'+r_3'=1+O(\eta)$. The radial observation preceding the definition of $T$, based on Remark~\ref{hdufherugy8t9ug90tru}, gives a uniform positive lower bound on all three radii and places at least two in $[1/48,1/2-1/48]$. We split into the three possible choices of this pair.
		
		For the selected pair, the thickened condition $Q(q-p_i-p_j)=O(\delta)$ becomes $Q((0,0,1)-A_qp_i-A_qp_j)=O(\eta)$. Equations \eqref{compat}--\eqref{Vmap} then turn it into an $O(\eta)$-thickened point--line incidence, between $N_\tau$ points and $N_\tau$ lines. Lemma \ref{lem:rescaleKT} and \eqref{FR} show that the number of admissible pairs is at most
		\[
		\eta^{-c-\varepsilon}K_\sigma^{2c}N_\tau^{2-2c}
		\lesssim_\varepsilon\delta^{-c-\varepsilon}\sigma^{-2cd}N_\tau^{2-2c}.
		\]
		Once the two original points are fixed, the third lies in an original $O(\delta)$-ball, giving $O(1)$ possibilities by separation. Notice that it is the original $\delta$, not the larger normalized thickness $\eta$, that is used in this last step.
	\end{proof}
	
	\subsection{Summing over the scales}
	Recall the value of $c$ in \eqref{fibre}.
	\begin{lemma}[One angular scale $\sigma$]\label{lem:onescale}
		For each dyadic $\sqrt\delta\lesssim\sigma\lesssim1$,
		\begin{equation}\label{scaleestimate}
			\begin{aligned}
				\E_\sigma
				&:=\sum_{\substack{q\\ Q(q)\asymp\sigma^2}}n(q)^2\\
				&\lesssim_\varepsilon
				N\delta^{-c-d(4-2c)-\varepsilon}
				\sigma^{(d-1)(4-2c)-2cd}.
			\end{aligned}
		\end{equation}
	\end{lemma}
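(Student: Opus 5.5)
We use the same "sup times sum" device as in \eqref{extime}, but localized to sectors. We split the sum defining $\E_\sigma$ according to the arc $\tau=\tau(q)\in\mathcal T_\sigma$ assigned by Lemma~\ref{lem:qsector}:
\[
\E_\sigma\le\sum_{\tau\in\mathcal T_\sigma}\ \sum_{\substack{q:\ \tau(q)=\tau\\ Q(q)\asymp\sigma^2}} n(q)^2
\le\sum_{\tau}\Bigl(\max_{q:\tau(q)=\tau}n(q)\Bigr)\sum_{q:\tau(q)=\tau}n(q).
\]

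For the inner sum we use Lemma~\ref{lem:qsector}. Every triple counted by $n(q)$ with $\tau(q)=\tau$ consists of points of $P_\tau^{\mathrm{cone}}$. Each ordered triple has its sum in $O(1)$ cubes. Hence $\sum_{q:\tau(q)=\tau}n(q)\lesssim N_\tau^3$. For the maximum we use Lemma~\ref{lem:fibre}, which gives
\[
\E_\sigma\lesssim_\varepsilon \delta^{-c-\varepsilon}\sigma^{-2cd}\sum_{\tau}N_\tau^{5-2c}.
\]

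Next we apply Lemma~\ref{lem:sector}, pulling out $4-2c$ powers of the maximum:
\[
\sum_\tau N_\tau^{5-2c}\le\Bigl(\max_\tau N_\tau\Bigr)^{4-2c}\sum_\tau N_\tau\lesssim N\bigl(\delta^{-d}\sigma^{d-1}\bigr)^{4-2c}.
\]
Here $4-2c>0$, since $c\le 1/2$. Combining the displays gives exactly
\[
N\delta^{-c-d(4-2c)-\varepsilon}\sigma^{(d-1)(4-2c)-2cd},
\]
which is \eqref{scaleestimate}.

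The only point needing care is the claim $\sum_{q:\tau(q)=\tau}n(q)\lesssim N_\tau^3$, that is, that the sector restriction applies to all triples and not merely to the one triple used to define $\tau(q)$. This follows because Lemma~\ref{lem:qsector} asserts that \emph{every} triple with sum $q+O(\delta)$ has all its directions in $\tau(q)$. The bounded overlap of $\mathcal T_\sigma$ is then used only through $\sum_\tau N_\tau\lesssim N$. We also check that $Q(q)\asymp\sigma^2$ with $\sigma\gg\sqrt\delta$ matches the hypotheses of Lemma~\ref{lem:fibre}. Scales $\sigma\lesssim C_0\sqrt\delta$ are already absorbed into Lemma~\ref{lem:nearnull}. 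At the endpoint $\sigma\asymp\sqrt\delta$ one may alternatively note that the bound reduces to that of Lemma~\ref{lem:nearnull}.
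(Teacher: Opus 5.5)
Your proposal is correct and follows the paper's proof essentially line by line. You split $\E_\sigma$ by the assigned arc $\tau(q)$, bound each piece by $\sup n(q)\cdot\sum n(q)$ using Lemma~\ref{lem:fibre} and the first-moment bound $N_\tau^3$, and then sum over $\tau$ via the two parts of \eqref{sectormass}, pulling out $4-2c$ powers of the maximum. Your side remark is also accurate and slightly cleaner than the paper's phrasing: since each $q$ is assigned to exactly one $\tau$, bounded overlap is needed only for $\sum_\tau N_\tau\lesssim N$.
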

	\begin{proof}
		Recall the assignment
		$q\mapsto\tau(q)$ in Lemma~\ref{lem:qsector}, and let
		$\mathcal Q_{\tau}$ be those $q$ assigned to $\tau$.
		We start with the trivial first-moment estimate
		\[
		\sum_{q\in\mathcal Q_{\tau}}n(q)\le N_\tau^3.
		\]
		Indeed, every triple in this sum belongs to
		$(P_\tau^{\mathrm{cone}})^3$, and its sum occupies exactly one
		output cube. 
		
		We next apply Lemma~\ref{lem:fibre} 
		\[
		\sup_{q\in\mathcal Q_{\tau}}n(q)
		\lesssim_\varepsilon
		\delta^{-c-\varepsilon}\sigma^{-2cd}N_\tau^{2-2c}.
		\]
		Combining the two gives
		\[
		\begin{aligned}
			\sum_{q\in \mathcal{Q}_\tau}n(q)^2
			&\lesssim_\varepsilon
			\bigl(\sup_{q\in\mathcal Q_{\tau}}n(q)\bigr)
			\sum_{q\in\mathcal Q_{\tau}}n(q)\\
			&\lesssim_\varepsilon
			\delta^{-c-\varepsilon}\sigma^{-2cd}
			N_\tau^{2-2c}\,N_\tau^3
			=\delta^{-c-\varepsilon}\sigma^{-2cd}N_\tau^{5-2c}.
		\end{aligned}
		\]
		Summing over $\tau$ and using the bounded overlap of $\tau$  yields
		\[
		\E_\sigma\lesssim_\varepsilon
		\delta^{-c-\varepsilon}\sigma^{-2cd}
		\sum_{\tau\in\mathcal T_\sigma}N_\tau^{5-2c}.
		\]
		Using the two parts of \eqref{sectormass} separately,
		\[
		\sum_\tau N_\tau^{5-2c}
		\le(\max_\tau N_\tau)^{4-2c}\sum_\tau N_\tau
		\lesssim N(\delta^{-d}\sigma^{d-1})^{4-2c}.
		\]
		This proves \eqref{scaleestimate}.
	\end{proof}
	
	\begin{proof}[Proof of Theorem \ref{thm:main}]
		For $1<d\le3/2$, $c=1/2$ and Lemma \ref{lem:onescale} reads
		\[
		\E_\sigma\lesssim_\varepsilon
		N\delta^{-3d-1/2-\varepsilon}\sigma^{2d-3}.
		\]
		Since $2d-3\le0$, the largest value is at $\sigma\asymp\sqrt\delta$, and equals $N\delta^{-2d-2-\varepsilon}$.
		
		For $3/2<d\le2$, $c=(2d-1)^{-1}$ and the same lemma reads
		\[
		\E_\sigma\lesssim_\varepsilon
		N\delta^{-(4d-1)-\varepsilon}\sigma^{4d-6}.
		\]
		Here $4d-6>0$, so the largest value is at $\sigma\asymp1$. The near-zero contribution in Lemma \ref{lem:nearnull} is $O(N\delta^{-2d-2})$, which is bounded by $N\delta^{-(4d-1)}$ for $d\ge3/2$. There are $O(\log(1/\delta))$ dyadic scales $\sigma$, and we  absorb this loss into $\delta^{-\varepsilon}$. 
	\end{proof}

	\section{$L^6$ Fourier decay for fractal measures}
	
	We now prove Theorem \ref{thm:fourierf}, which we recall below. The derivation from the energy bound is rather standard. One decomposes $\mu$ into {0,1}-valued densities $\mu_n$ that are locally constant at scale $\delta$. The different contributions are evaluated using the triangle inequality. The centers of the $\delta$-balls supporting $\mu_n$ satisfy KT non-concentration \eqref{kt}, albeit with constants that are not $O(1)$. 
	Random sampling is typically used in the literature to go from large  to $O(1)$ constants. The even-ness of 6 allows for the sixth moment of $\widehat{\mu_n}$ to be expressed in terms of the 3-energy of the centers. Rather than pursuing this somewhat technical approach, we follow a more elegant random sampling that could be of independent interest.

	\begin{theorem}\label{thm:fourier}
		Let $1<d\le2$, and let $\mu$ be a probability measure supported on $\C$ such that
		\begin{equation}\label{frostman}
			\mu(B(x,r))\le C r^d,\qquad 0<r\le 2.
		\end{equation}
		For every $\varepsilon>0$ and $R\ge2$,
		\begin{equation}\label{fourierbound}
			\int_{B(0,R)}|\widehat\mu(\xi)|^6\,d\xi
			\lesssim_{\varepsilon,C} R^{\varepsilon}
			\max\{R^{5-3d},R^{2-d}\}.
		\end{equation}
	\end{theorem}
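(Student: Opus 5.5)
Set $\delta=1/R$. The plan is to reduce \eqref{fourierbound} to Theorem~\ref{thm:main} by discretizing $\mu$ at scale $\delta$.

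\textbf{Step 1: smoothing.} Fix a Schwartz bump $\phi$ with $\widehat\phi\ge 1$ on $B(0,1)$ and $\operatorname{supp}\phi\subset B(0,1)$, and put $\phi_\delta=\delta^{-3}\phi(\cdot/\delta)$. Then
\[
\int_{B(0,R)}|\widehat\mu|^6\le\int|\widehat{\mu*\phi_\delta}|^6 .
\]
Write $\nu=\mu*\phi_\delta$, a density supported in the $\delta$-neighbourhood of $\C$.

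\textbf{Step 2: pigeonholing the mass.} Cover $\C$ by a boundedly overlapping family of $\delta$-balls $B$. Each ball has mass $\mu(B)\le C\delta^d$. Pigeonhole dyadically, $\mu(B)\sim 2^{-k}\delta^d$, and discard balls with $2^{-k}\le\delta^{10}$; their total contribution is negligible by the trivial bound. This splits $\nu=\sum_k\nu_k$ into $O(\log 1/\delta)$ pieces. By the triangle inequality in $L^6$ it suffices to bound each piece, losing only $\log$ factors. For fixed $k$ let $P_k$ be the set of centres (refined to be $\delta$-separated), and set $M_k=\#P_k\lesssim 2^k\delta^{-d}$.

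\textbf{Step 3: non-concentration of the centres.} The Frostman condition gives
\[
\#(P_k\cap B(x,\rho))\lesssim 2^k(\rho/\delta)^d ,
\]
so $P_k$ is a $(\delta,d,2^k)$-KT set, with a large constant. This is where the random sampling enters. Keep each point independently with probability $2^{-k}$. The resulting random subset $P_k'$ is, with positive probability (after a union bound over $\delta$-dyadic balls and a further dyadic refinement handling balls with small expected count), a $(\delta,d,O(\log 1/\delta))$-KT set. Its energy satisfies
\[
\E\,\E_{3,\delta}(P_k')\gtrsim 2^{-6k}\E_{3,\delta}(P_k)
\]
up to terms coming from coincidences among the six indices. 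Those degenerate tuples have energy at most that of lower-order tuples, so they are controlled by trivial counts such as $M_k^3\cdot(\text{fibres})$ and can be checked directly against the target.

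\textbf{Step 4: assembling.} Expanding the sixth power,
\[
\int|\widehat{\nu_k}|^6\lesssim(2^{-k}\delta^d)^6\,\delta^{-3}\,\E_{3,\delta}(P_k),
\]
since $\widehat{\phi_\delta}$ is essentially supported on a ball of volume $\delta^{-3}$. Combining with Step 3 and applying Theorem~\ref{thm:main} to $P_k'$, which has at most $\delta^{-d}$ points, gives
\[
\E_{3,\delta}(P_k)\lesssim 2^{6k}\delta^{-\varepsilon}\max\{\delta^{-(3d+2)},\delta^{-(5d-1)}\}.
\]
Hence
\[
\int|\widehat{\nu_k}|^6\lesssim\delta^{6d-3-\varepsilon}\max\{\delta^{-3d-2},\delta^{-5d+1}\}=R^{\varepsilon}\max\{R^{5-3d},R^{2-d}\}.
\]

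\textbf{Main obstacle.} I expect the main difficulty to be the sampling step. One must show that $\E_{3,\delta}$ of the sample is comparable to $2^{-6k}\E_{3,\delta}(P_k)$ while the KT constant drops to $O(\delta^{-\varepsilon})$, and the degenerate (repeated-index) tuples must be handled separately. The first thing I would try is the expectation identity above, combined with the refined bound of Theorem~\ref{thm:main}, which is linear in $N$, to absorb the diagonal terms. Failing that, I would use the dependence of Theorem~\ref{thm:main} on $C$ directly: its proof gives a polynomial dependence on $K_\sigma$ through Fu--Ren.
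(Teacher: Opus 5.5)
Your argument is correct in outline, but it is not the paper's route. It is essentially the ``somewhat technical approach'' that the paper mentions at the start of Section~4 and then declines.

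You pigeonhole the cells into $O(\log(1/\delta))$ dyadic mass levels and recombine them with the triangle inequality in $L^6$. At each level you thin the centres uniformly at rate $2^{-k}$ and use $\mathbb E\,\E_{3,\delta}(P_k')\ge 2^{-6k}\E_{3,\delta}(P_k)$.

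The paper does no pigeonholing. It keeps all cells $D_j$ of a maximal $\delta$-net, with $m_j=\mu(D_j)\le C_1\delta^d$. It draws six \emph{independent} random subsets $S_1,\dots,S_6$, selecting $x_j$ with probability $a_j=m_j/(C_1\delta^d)$. Counting the tuples in $\Gamma$ with $x_{j_k}\in S_k$ gives the exact identity $\mathbb E Z=\sum_\Gamma a_{j_1}\cdots a_{j_6}$. Repeated indices are harmless because the six copies are independent. Hence $\mu^{\otimes6}(F_{A,\delta})\le (C_1\delta^d)^6\,\mathbb E Z$ directly, and the Fourier side is a single positive-kernel step with $\phi=g*\widetilde g$.

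The trade-off: the weighted sampling removes the level decomposition, the discarding of light cells and the extra logarithms. Your version is more modular. It isolates the reusable fact that a $(\delta,d,C)$-KT set on $\C$ satisfies the bound of Theorem~\ref{thm:main} up to a factor polynomial in $C$.

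Three details need fixing or can be dropped.

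\emph{The probability of the good event.} ``With positive probability'' is not enough. An upper bound on $\mathbb E\,\E_{3,\delta}(P_k')$ requires the good event $G$ to have probability $1-O(\delta^D)$, which your union bound does give. You also need a trivial bound on $G^c$, for example $\E_{3,\delta}(P_k')\lesssim(\#P_k)^5\lesssim\delta^{-10}$. This is exactly the paper's $G/G^c$ split. The factor $2^{6k}$ then cancels against $(2^{-k}\delta^d)^6$ in Step~4.

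\emph{Coincident indices.} These are not an obstacle. A tuple with $m\le6$ distinct points survives with probability $2^{-km}\ge 2^{-6k}$. So your expectation inequality holds with no correction term, and the ``trivial counts'' for degenerate tuples are unnecessary.

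\emph{Your fallback.} It already suffices and makes sampling unnecessary. Track $C$ through Lemmas~\ref{lem:sector}, \ref{lem:rescaleKT}, \ref{lem:nearnull} and \ref{lem:onescale}, using $N_\tau\lesssim C\delta^{-d}\sigma^{d-1}$ and $K_\sigma\lesssim C\sigma^{-d-1}$. This gives $\E_{3,\delta}(P)\lesssim C^4N\delta^{-\varepsilon}\max\{\delta^{-(2d+2)},\delta^{-(4d-1)}\}$. With $C\sim2^k$ and $N=M_k\lesssim 2^k\delta^{-d}$, you get $\E_{3,\delta}(P_k)\lesssim 2^{5k}\delta^{-\varepsilon}\max\{\delta^{-(3d+2)},\delta^{-(5d-1)}\}$. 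That beats the $2^{6k}$ you need. The paper also relies implicitly on some polynomial dependence on $C$, since its sampled set only has KT constant $O(\log(1/\delta))$.
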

	
	We first pass from point sets to measures at scale $\delta$. 
	
	\begin{lemma}\label{lem:measureenergy}
		Under the hypotheses of Theorem \ref{thm:fourier}, for $0<\delta\ll 1$ and any fixed $A>0$,
		\begin{equation}\label{measureenergy}
			\mu^{\otimes6}\!\left\{(p_i)_{i=1}^6:
			\left|p_1+p_2+p_3-p_4-p_5-p_6\right|\le A\delta\right\}
			\lesssim_{\varepsilon,C,A}
			\delta^{6d-\varepsilon}\max\{\delta^{-(3d+2)},\delta^{-(5d-1)}\}.
		\end{equation}
	\end{lemma}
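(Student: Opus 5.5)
We reduce the measure estimate to Theorem \ref{thm:main} by a random sampling argument at scale $\delta$. Set $\Delta=\max\{\delta^{-(3d+2)},\delta^{-(5d-1)}\}$; the goal is $\mu^{\otimes6}(E_A)\lesssim\delta^{6d-\varepsilon}\Delta$, where $E_A$ is the set in \eqref{measureenergy}.

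\textbf{Sampling.} Draw $M\asymp\delta^{-d}$ independent points $X_1,\dots,X_M$ with law $\mu$ and form the random multiset $X=\{X_1,\dots,X_M\}$. For six distinct indices, the probability that $(X_{i_1},\dots,X_{i_6})\in E_A$ is exactly $\mu^{\otimes6}(E_A)$. Hence
\[
\E\,\#\{(i_1,\dots,i_6)\ \text{distinct}:(X_{i_k})\in E_A\}=(M)_6\,\mu^{\otimes6}(E_A)\gtrsim \delta^{-6d}\mu^{\otimes6}(E_A).
\]
It therefore suffices to show that this expected count is $\lesssim_\varepsilon\delta^{-\varepsilon}\Delta$.

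\textbf{Regularizing the sample.} The sample is neither $\delta$-separated nor KT with an $O(1)$ constant almost surely. By \eqref{frostman}, for a ball $B$ of radius $\rho\ge\delta$ the count $\#(X\cap B)$ is binomial with mean $\le CM\rho^d\lesssim(\rho/\delta)^d$. Chernoff bounds together with a union bound over $O(\delta^{-O(1)})$ dyadic balls then show the following. With probability $1-O(\delta^{100})$, every such ball contains at most $K=O(\log(1/\delta))$ times $(\rho/\delta)^d$ sample points. On this good event $G$:
\begin{itemize}
\item each $\delta$-cube contains $O(\log 1/\delta)$ points;
\item splitting $X$ into $O(\log 1/\delta)$ subfamilies, or simply passing to the centers of occupied $\delta$-cubes counted with multiplicity $\le K$, yields $\delta$-separated $(\delta,d,O(K))$-KT sets $P$ on (a $\delta$-neighborhood of) $\C$.
\end{itemize}

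\textbf{Applying the energy bound.} Each tuple in $E_A$ yields a tuple of centers with $|p_1+p_2+p_3-p_4-p_5-p_6|\le (A+6)\delta$. Theorem \ref{thm:main} (with $C_*=A+6$ and $N\lesssim\delta^{-d}$) then gives a count $\lesssim K^{O(1)}\delta^{-\varepsilon}\Delta$. The logarithmic losses from multiplicities and from the dependence of the KT constant are absorbed into $\delta^{-\varepsilon}$. On the complement of $G$ we use the trivial bound $M^6$ times $O(\delta^{100})$, which is negligible.

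\textbf{Main obstacle.} Two points need care. First, the centers lie near $\C$ rather than on it, so we must either project them onto $\C$ (moving each by $O(\delta)$, absorbed into $C_*$, while preserving separation up to constants) or check that Theorem \ref{thm:main} tolerates $O(\delta)$-perturbations. Second, the implicit constant in Theorem \ref{thm:main} depends on the KT constant $C$ in a way we must track. We expect this dependence to be polynomial: the dependence through $K_\sigma$ in Lemma \ref{lem:rescaleKT} and through Fu--Ren is of the form $(K_PK_L)^c$, and Lemma \ref{lem:sector} is linear in $C$. The alternative is to refine the sample by a further random thinning to obtain a genuinely $O(1)$-KT set, at the cost of a $\log$ factor in density. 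We expect this bookkeeping, not the probabilistic step, to be the delicate part.
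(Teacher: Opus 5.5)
Your proof is correct in outline, but it uses a different sampling model from the paper.

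\textbf{The paper's route.} The paper never draws points from $\mu$. It fixes a maximal $\delta$-separated net $\{x_j\}\subset\operatorname{supp}\mu$ with cells $D_j\subset B(x_j,\delta)$. It keeps each $x_j$ with probability $a_j=\mu(D_j)/(C_1\delta^d)\le 1$, in six independent rounds $S_1,\dots,S_6$. It then uses $\mu^{\otimes6}(F_{A,\delta})\le\sum_\Gamma m_{j_1}\cdots m_{j_6}=(C_1\delta^d)^6\,\mathbb E Z$, where $Z$ counts $(A+6)\delta$-additive tuples with $x_{j_k}\in S_k$. The independent rounds make $\mathbb E Z$ factor even when indices repeat; your restriction to distinct indices plays the same role. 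The Chernoff/union-bound step and the trivial bound off the good event are the same in both proofs.

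\textbf{What each route buys.} The paper's selected set is automatically a $\delta$-separated subset of $\C$, so Theorem~\ref{thm:main} applies verbatim; the price is discretizing $\mu$ before sampling. Your i.i.d.\ sampling gives the exact identity $(M)_6\,\mu^{\otimes6}(E_A)$ with no prior discretization, but needs post-processing afterwards.

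\textbf{The post-processing step.} Avoid projecting cube centers onto $\C$. Centers differing in a direction nearly normal to $\C$ project to almost the same point, so separation is not preserved up to constants. Instead, take a maximal $\delta$-separated subset $Y$ of the sample points themselves. Then $Y\subset\C$, every sample point lies within $\delta$ of $Y$, and on $G$ each element of $Y$ absorbs $O(K)$ sample points. Hence your count is at most $O(K)^6\,\E_{3,\delta}(Y)$ with $C_*=A+6$.

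\textbf{The KT constant.} Both proofs apply Theorem~\ref{thm:main} with KT constant $\asymp\log(1/\delta)$, so both need the implicit constant there to be polynomial in the KT constant. The paper uses this tacitly. Your justification is correct: the dependence is linear in Lemmas~\ref{lem:sector} and~\ref{lem:rescaleKT}, and then enters as $(K_PK_L)^c$ in Fu--Ren.

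Your fallback of thinning by a logarithmic factor would not yield an $O(1)$-KT set with high probability. At $\rho\sim\delta$ the tail is only $(\log 1/\delta)^{-O(1)}$, which is too weak for a union bound over $\sim\delta^{-2}$ balls. Sampling $M\asymp\delta^{-d+\varepsilon}$ points does work. The Chernoff tail becomes $\delta^{\varepsilon K}$, so the sample is $(\delta,d,O_\varepsilon(1))$-KT with probability $1-O(\delta^{100})$. The resulting loss of $\delta^{-6\varepsilon}$ is harmless, and this removes the constant-tracking entirely. It would equally streamline the paper's argument.
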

	
	\begin{proof}

		Choose a maximal $\delta$-separated set $\{x_j\}$ in $\operatorname{supp}\mu$ and partition
		$\operatorname{supp}\mu$ into measurable  cells $D_j$ associated with this net.
		Each $D_j$ is contained in $B(x_j,\delta)$. Write $m_j=\mu(D_j)$.
		The points $x_j$ lie on $\C$, are $\delta$-separated, and satisfy
		$m_j\le C_1\delta^d$ by \eqref{frostman}.
		Set $a_j=m_j/(C_1\delta^d)\le1$.
		
		We follow a probabilistic approach. Independently for each $k=1,\ldots,6$ and each $j$, select the center $x_j$ with probability
		$a_j$. Denote the resulting six random sets by $S_k$, and put $S=\bigcup_{k=1}^6S_k$.
		
		We first prove the following: for every $D>0$ there is a constant $K_D$ such that, with probability at least
		$1-O_D(\delta^D)$,
		\begin{equation}\label{randomKT}
			\#(S\cap B(x,\rho))\le K_D\log(1/\delta)(\rho/\delta)^d,
			\qquad \delta\le\rho\le2.
		\end{equation}
		Here are the details. For each $j$ let $Y_j=\mathbf 1_{\{x_j\in S\}}$.
		The variables $Y_j$ are independent as $j$ varies, and
		\[
		\mathbb P(Y_j=1)=1-(1-a_j)^6\le 6a_j.
		\]
		Let $r_\ell=2^\ell\delta$, $0\le\ell\le L$, where $L$ is the smallest
		integer such that $r_L\ge2$. For each center $x_j$ of the original
		$\delta$-separated net (whether or not $x_j$ is selected into $S$), put
		\[
		X_{j,\ell}=\#(S\cap B(x_j,2r_\ell))
		=\sum_{i:\,x_i\in B(x_j,2r_\ell)}Y_i.
		\]
		Since the cells $D_i$ are disjoint and $D_i\subset B(x_i,\delta)$,
		while $\delta\le r_\ell$, we have
		\[
		\begin{aligned}
			\lambda_{j,\ell}:=\mathbb E X_{j,\ell}
			&\le \frac{6}{C_1\delta^d}
			\sum_{x_i\in B(x_j,2r_\ell)}\mu(D_i)\\
			&\le \frac{6\mu(B(x_j,3r_\ell))}{C_1\delta^d}
			\lesssim_C (r_\ell/\delta)^d.
		\end{aligned}
		\]
		When $3r_\ell>2$, the last inequality follows instead from
		$\mu(B)\le1$ and $r_\ell>2/3$.
		Put $t_\ell=K\log(1/\delta)(r_\ell/\delta)^d$, with $K$ to be chosen.
		The Chernoff bound for sums of independent Bernoulli variables gives
		\[
		\mathbb P(X_{j,\ell}\ge t_\ell)
		\le\left(\frac{e\lambda_{j,\ell}}{t_\ell}\right)^{t_\ell}
		\le 2^{-t_\ell}\le\delta^{K\log2},
		\]
		provided $K$ is sufficiently large in terms of $C,d$.
		There are $O(\log(1/\delta))$ choices of $\ell$ and
		$O(\delta^{-3})$ original centers $x_j$, by separation and the
		boundedness of $\operatorname{supp}\mu\subset\C$.
		Taking the union bound over these  pairs $(j,\ell)$,
		the probability that any estimate fails is at most
		\[
		O\bigl(\delta^{-3}\log(1/\delta)\,\delta^{K\log2}\bigr)
		=O_D(\delta^D),
		\]
		if $K$ is large enough depending on $D$.
		
		Fix $D>18$, and choose $K=K_D$ sufficiently
		large above. Let $G$ denote the event that $X_{j,\ell}\le t_\ell$
		simultaneously for every center $x_j$ of the original net and
		every dyadic radius $r_\ell$. The preceding union bound shows that
		\[
		\mathbb P(G^c)\lesssim_D\delta^D.
		\]
		For every outcome in $G$, we first verify that the
		non-concentration estimate \eqref{randomKT} holds for arbitrary balls.
		Take a ball $B(x,\rho)$ with $\delta\le\rho\le2$.
		If $S\cap B(x,\rho)$ is empty, there is
		nothing to prove. Otherwise, choose one of its selected points
		$x_j\in S\cap B(x,\rho)$. For every $z\in S\cap B(x,\rho)$,
		$|z-x_j|\le |z-x|+|x-x_j|<2\rho$, and hence
		\[
		S\cap B(x,\rho)\subset S\cap B(x_j,2\rho)
		\subset S\cap B(x_j,2r_\ell),
		\]
		where $\ell$ is chosen so that $\rho\le r_\ell<2\rho$.
		The simultaneous estimates for the original centers therefore give
		\[
		\#(S\cap B(x,\rho))\le X_{j,\ell}\le t_\ell
		\le 2^d K\log(1/\delta)(\rho/\delta)^d.
		\]
		Absorbing $2^d$ into $K_D$ proves
		\eqref{randomKT} simultaneously for all centers and radii on $G$.
		
		Let $\Gamma$ consist of (deterministic) ordered six-tuples of cell indices whose representatives satisfy
		$|x_{j_1}+x_{j_2}+x_{j_3}-x_{j_4}-x_{j_5}-x_{j_6}|\le(A+6)\delta$.
		We first relate this discrete sum to the measure on the left of
		\eqref{measureenergy}. Write
		\[
		F_{A,\delta}:=\Big\{(p_1,\ldots,p_6)\in\C^6:
		\Big|\sum_{k=1}^3p_k-\sum_{k=4}^6p_k\Big|\le A\delta\Big\}.
		\]
		If $(p_1,\ldots,p_6)\in F_{A,\delta}$ and $p_k\in D_{j_k}$,
		then $|p_k-x_{j_k}|\le\delta$ for each $k$, and hence
		\[
		\begin{aligned}
			\Big|\sum_{k=1}^3x_{j_k}-\sum_{k=4}^6x_{j_k}\Big|
			&\le \Big|\sum_{k=1}^3p_k-\sum_{k=4}^6p_k\Big|
			+\sum_{k=1}^6|p_k-x_{j_k}|\\
			&\le(A+6)\delta.
		\end{aligned}
		\]
		Thus $(j_1,\ldots,j_6)\in\Gamma$. Since the cells form a
		measurable partition and $\mu^{\otimes6}$ is a product measure,
		\begin{equation}\label{measuretodiscrete}
			\begin{aligned}
				\mu^{\otimes6}(F_{A,\delta})
				&=\sum_{j_1,\ldots,j_6}
				\mu^{\otimes6}\bigl(F_{A,\delta}\cap
				(D_{j_1}\times\cdots\times D_{j_6})\bigr)\\
				&\le\sum_{(j_1,\ldots,j_6)\in\Gamma}
				\mu(D_{j_1})\cdots\mu(D_{j_6})
				=\sum_{\Gamma}m_{j_1}\cdots m_{j_6}.
			\end{aligned}
		\end{equation}

		Define the random count
		\[
		Z:=\#\{(j_1,\ldots,j_6)\in\Gamma:
		x_{j_k}\in S_k\ (1\le k\le6)\}.
		\]
		Independence of the six selections, even when some indices coincide, gives
		\[
		\mathbb E Z=\sum_{\Gamma}a_{j_1}\cdots a_{j_6}.
		\]
		As $m_j=C_1\delta^d a_j$, \eqref{measuretodiscrete} now yields
		\begin{equation}\label{measuretoexpectation}
			\mu^{\otimes6}(F_{A,\delta})
			\le\sum_{\Gamma}m_{j_1}\cdots m_{j_6}
			=(C_1\delta^d)^6\mathbb E Z.
		\end{equation}
		For each outcome, $Z$ is bounded by the energy $\E_{3,\delta}(S)$,
		where we take the fixed tolerance constant $C_*=A+6$ in the definition
		of that energy; the underlying separation scale remains $\delta$.
		Split the expectation using the  good event $G$:
		\[
		\mathbb E Z=\mathbb E(Z\mathbf 1_G)+\mathbb E(Z\mathbf 1_{G^c}).
		\]
		On $G$, the Katz--Tao estimate \eqref{randomKT} holds for $S$.
		Consequently, for each individual outcome $\omega\in G$,
		Theorem~\ref{thm:main}, applied to the $\delta$-separated set
		$S(\omega)\subset\C$ with tolerance constant $C_*=A+6$, gives the
		uniform pointwise upper bound
		\[
		Z(\omega)\le\E_{3,\delta}(S(\omega))
		\lesssim_{\varepsilon,C,D,A}\delta^{-\varepsilon}
		\max\{\delta^{-(3d+2)},\delta^{-(5d-1)}\}.
		\]
		Thus
		\[
		\begin{aligned}
			\mathbb E(Z\mathbf 1_G)
			&\lesssim_{\varepsilon,C,D,A}\delta^{-\varepsilon}
			\max\{\delta^{-(3d+2)},\delta^{-(5d-1)}\}\,\mathbb P(G)\\
			&\lesssim_{\varepsilon,C,D,A}\delta^{-\varepsilon}
			\max\{\delta^{-(3d+2)},\delta^{-(5d-1)}\}.
		\end{aligned}
		\]
		On $G^c$ we use only the deterministic estimate
		$Z\le\#\Gamma\lesssim\delta^{-18}$, as there are $O(\delta^{-3})$
		original net points. Thus
		\[
		\mathbb E(Z\mathbf 1_{G^c})
		\lesssim\delta^{-18}\mathbb P(G^c)
		\lesssim_D\delta^{D-18}.
		\]
		Our fixed choice $D>18$ makes this term bounded (indeed vanishing)
		as $\delta\to0$. Together with the bound on $G$, this yields
		\[
		\mathbb E Z
		\lesssim_{\varepsilon,C,A}\delta^{-\varepsilon}
		\max\{\delta^{-(3d+2)},\delta^{-(5d-1)}\}.
		\]
		Multiplying by $(C_1\delta^d)^6$ in \eqref{measuretoexpectation}
		proves \eqref{measureenergy}.
		
	\end{proof}
	
	\begin{proof}[Proof of Theorem \ref{thm:fourier}]
		Choose a nonnegative, smooth, compactly supported function $g$ on $\R^3$, of
		sufficiently small support and with $\int g=1$. Put
		$\phi=g*\widetilde g$, where $\widetilde g(x)=g(-x)$.
		Then $\phi\ge0$, $\widehat\phi=|\widehat g|^2\ge c>0$ on $B(0,1)$,
		and $\phi$ is supported in a fixed ball $B(0,A)$.
		With $\nu=\mu*\mu*\mu$ and $\widetilde\nu(E)=\nu(-E)$, Fourier inversion gives
		\begin{align*}
			\int_{B(0,R)}|\widehat\mu(\xi)|^6\,d\xi
			&\lesssim \int_{\R^3}|\widehat\mu(\xi)|^6
			\widehat\phi(\xi/R)\,d\xi\\
			&=R^3\int_{\R^3}\phi(Rx)\,d(\nu*\widetilde\nu)(x)\\
			&\lesssim R^3\,\mu^{\otimes6}\!\left\{(p_i):
			\left|\sum_{i=1}^3p_i-\sum_{i=4}^6p_i\right|\le A/R\right\}.
		\end{align*}
		Apply Lemma \ref{lem:measureenergy} with $\delta=R^{-1}$. The last expression is
		\[
		\lesssim_\varepsilon
		R^{3-6d+\varepsilon}\max\{R^{3d+2},R^{5d-1}\}
		=R^\varepsilon\max\{R^{5-3d},R^{2-d}\},
		\]
		as required.
	\end{proof}
	\begin{remark}
		\label{djhfdhufhrgtgu9thu90}
		We close by presenting the sharpness of the exponents in Theorem \ref{thm:fourier}.
		For each $R\geq 2$, set
		$\delta=R^{-1}$ and choose a maximal-size $(\delta,d)$-KT set
		$P$ that maximizes the energy
		bound. Let $\mu$ be normalized surface measure on the union
		of small $\delta$-neighborhoods of $P$ within the cone. The KT
		condition makes $\mu$ a $d$-Frostman measure with constant
		independent of $R$. A standard positive-kernel argument
		then converts the discrete energy lower bound into the corresponding
		Fourier lower bound. Thus the exponents are sharp uniformly over
		$d$-Frostman measures, allowing the measure to depend on $R$.
	\end{remark}

	\paragraph{Use of AI.} The mathematical results in this paper are mostly due to the author. ChatGPT (GPT-6, OpenAI) was used to sharpen the exposition, to generate the map $V$ and the alternative proof to Theorem \ref{thm:fourier} presented here. 
	
	\bigskip
	\noindent Department of Mathematics, Indiana University, 831 East 3rd St., Bloomington, IN 47405, USA.
	
	\noindent \textit{Email address:} \texttt{demeterc@iu.edu}
\end{document}